\newtheorem{theorem}{Theorem}
\theoremstyle{plain}
\newtheorem{corollary}{Corollary}
\newtheorem{definition}{Definition}
\newtheorem{example}{Example}
\newtheorem{problem}{Problem}[]
\newtheorem{proposition}{Proposition}
\newtheorem{remark}{Remark}
\numberwithin{equation}{section}
\newcommand{\eq}{\hspace*{-2mm}&=&\hspace*{-2mm}}
\begin{document}
\title[A plasticity principle of convex quadrilaterals]{A plasticity principle of convex quadrilaterals on a convex surface of bounded specific curvature}
\author{Anastasios Zachos}

\address{University of Patras, Department of Mathematics, GR-26500 Rion, Greece}

\email{azachos@gmail.com,zaxos@master.math.upatras.gr}

 \keywords{inverse Fermat-Torricelli problem, plasticity principle of convex quadrilaterals, comparative plasticity, convex surface; bounded specific curvature}
 \subjclass{51E10, 53C22, 51E12, 52A10, 53C45}

\begin{abstract}
We derive the plasticity equations for convex quadrilaterals on a
complete convex surface with bounded specific curvature and prove
a plasticity principle which states that: \textit{Given four
shortest arcs which meet at the weighted Fermat-Torricelli point
their endpoints form a convex quadrilateral and the weighted
Fermat-Torricelli point belongs to the interior of this convex
quadrilateral, an increase of the weight corresponding to a
shortest arc causes a decrease of the two weights that correspond
to the two neighboring shortest arcs and an increase of the weight
corresponding to the opposite shortest arc} by solving the inverse
weighted Fermat-Torricelli problem for quadrilaterals on a convex
surface of bounded specific curvature. Furthermore, we show a
connection between the plasticity of convex quadrilaterals on a
complete convex surface with bounded specific curvature with the
plasticity of some generalized convex quadrilaterals on a manifold
which is certainly composed by triangles. We also study some cases
of symmetrization of weighted convex quadrilaterals by introducing
a new symmetrization technique which transforms some classes of
weighted geodesic convex quadrilaterals on a convex surface to
parallelograms in the tangent plane at the weighted
Fermat-Torricelli point of the corresponding quadrilateral.
\end{abstract}

\maketitle

\section{introduction}
\label{sec:intro}

We start with the definitions of the specific curvature of a
domain $W$ and the Gaussian curvature at a point $P$ on a convex
surface which is considered as the whole boundary of a convex body
in $\mathbb{R}^{3}$, and mention some fundamental results on a
complete convex surface of bounded specific curvature from the
book of A.D.~Alexandrov (see \cite{Alexandrov:06}).

\begin{definition}[{see \cite[pp.~365--366]{Alexandrov:06}}]\rm
Any domain $W$ on a convex surface has some curvature
$\omega(W)=\iint_{W} K(P)\,dS$, where $K$ is the Gaussian
curvature at the point $P$ and $S(W)$ is the corresponding area.
The ratio $\frac{\omega(W)}{S(W)}$ is called the \textit{specific
curvature of the domain} $W$, and it is denoted by $\kappa(W)$.
 A convex surface has \textit{Gaussian curvature equal to $K$ at a point $P$} if the limit of the specific curvature of the domain tends to the limit $K$ whenever this domain shrinks to the point $P$.
\end{definition}

We recall the following fundamental results which have been
formulated and proved by A.D.~Alexandrov (see \cite[pp.~365, 377:
Theorems 2, 3, p.~7, 64--66: Theorem 3, p.~377: footnote
9]{Alexandrov:06}):

\vskip1mm (1) If the specific curvature $\kappa(W)$ of every
domain $W$ lying in some neighborhood of a point $P$ on a convex
surface does not exceed some positive number $K$ then there exists
$r_{0}>0$ such that it is possible to draw a shortest arc of
length at least $r_{0}$ from the point $P$ in each direction.

\vskip1mm (2) Suppose that a \textit{specific curvature} $\kappa$
of a convex surface $M$ is \textit{bounded} on a neighborhood $W$,
i.e., $K_{1}\le \kappa(W)\le K_{2}$. Introduce the polar geodesic
coordinates on the sphere $S_{K_{i}}\ (i=1,2)$, and construct a
mapping such that to each point of the neighborhood $W$ with
coordinates $r, \varphi,$ correspond points of the spheres
$S_{K_{i}}$ for $i=1,2,$ with the same coordinates. If, to a curve
$L$ on $M$, this mapping puts in correspondence the curves $L_{1}$
and $L_{2}$ on the spheres $S_{K_{1}}$ and $S_{K_{2}},$ then the
lengths of these curves are related by the inequalities
\[
 S(L_{1})\ge S(L)\ge S(L_{2}).
\]

(3) If the specific curvature $\kappa(W)$ is $\ge K$ ($\le K$) in
a triangle $\triangle ABC$, then the angles of $\triangle ABC$ are
no less (or greater) than the corresponding angles of the triangle
$\triangle\mathcal{ABC}$ on the $K$-plane (see for the definition
of $K$-plane in Section~\ref{sec:3}, p.~9). For $K>0,$ we consider
the perimeter of the triangle to be lower than
$\frac{2\pi}{\sqrt{K}}$.

\vskip1mm (4) Each two points of a complete convex surface can be
connected by a shortest arc.

\vskip1mm (5) If the specific curvature $\kappa(W)$ is $\le K$ on
a complete convex surface, then each arc of a geodesic of length
at most $\frac{\pi}{\sqrt{K}}$ is a shortest arc in general but
not compared with close lines.

\vskip1mm We consider $M$ to be a complete convex surface with
bounded specific curvature $K_{1}<\kappa(W)<K_{2}$. The length of
each shortest arc on $W$ is greater than
$\frac{\pi}{\sqrt{K_{2}}}$ and smaller than
$\frac{\pi}{\sqrt{K_{1}}}$.

\vskip1mm\hskip-2mm We state the weighted Fermat-Torricelli
(w.\,F-T) problem on $M$ for quadrilaterals.

\begin{problem} Let $ABCD\subset M$ be a convex quadrilateral whose
perimeter is less than $\frac{2\pi}{\sqrt{K_{1}}}$. Suppose that a
positive number (weight) $w_{R},$ corresponds to the vertex
$R\in\{A,B,C,D\}.$ Find the w.\,F-T point $P_{F}$ such that
\begin{equation}\label{minimumcs}
 f(P_{F})=w_{A}l_{A}+w_{B}l_{B}+w_{C}l_{C}+w_{D}l_{D} \to\ \min
\end{equation}
where $l_{R}$ is the length of the shortest arc from the w.\,F-T
point $P_{F}$ to the vertex $R\in\{A,B,C,D\}$
(Fig.~\ref{figureconvFTfirst}).
\end{problem}

\begin{figure}
\centering
\includegraphics[scale=0.30]{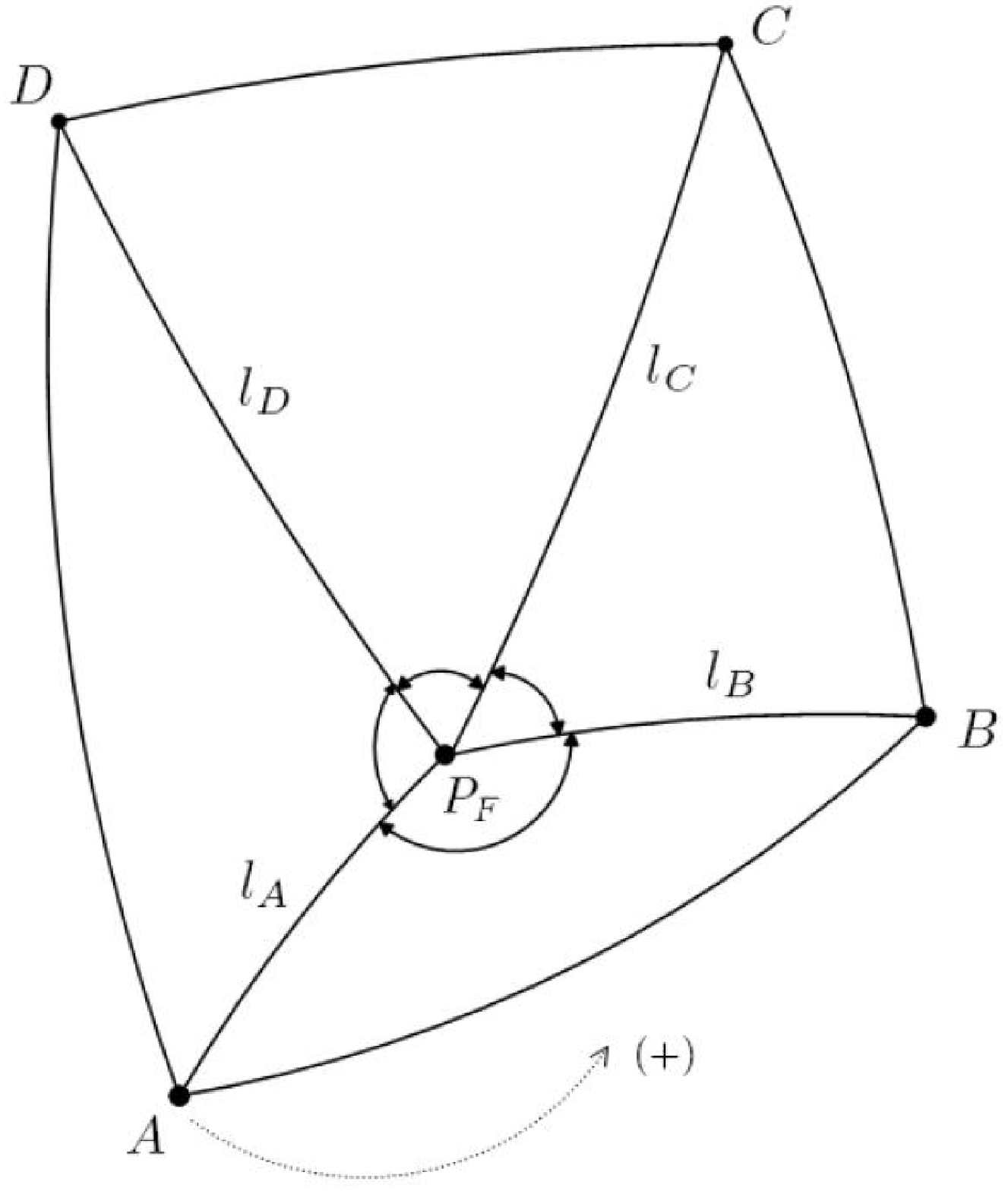}
\caption{} \label{figureconvFTfirst}
\end{figure}

In the paper we provide the plasticity equations for convex
quadrilaterals on a complete convex surface with bounded specific
curvature $M$ (Theorem~\ref{T-01} and Corollary~\ref{corol})  by
applying a method of differentiation for shortest arcs which has
been introduced  for the differentiation of the length of
geodesics in \cite{Cots/Zach:11}, \cite{Zachos/Cots:10} which
generalize the plasticity equations introduced in
\cite{Zachos/Zou:88} on the two dimensional K-plane (Two
dimensional sphere, hyperbolic plane and Euclidean plane) and we
prove the plasticity property of convex quadrilaterals on M which
was numerically verified for convex quadrilaterals on the two
dimensional K-plane in \cite{Zachos/Zou:88} without giving a
proof.

The main result of the paper is the plasticity principle of convex
quadrilaterals on $M$ (Theorem~\ref{plasticityprincipleM} in
Section~\ref{sec:2}) which states that:

Given four shortest arcs which meet at the weighted
Fermat-Torricelli point and their endpoints form a convex
quadrilateral and the weighted Fermat-Torricelli point belongs to
the interior of this convex quadrilateral, an increase of the
weight corresponding to a shortest arc causes a decrease of the
two weights that correspond to the two neighboring shortest arcs
and an increase of the weight corresponding to the opposite
shortest arc.

Using comparison geometry, we derive the plasticity equations for
convex quadrilaterals for two cases of manifolds with metric of
positive curvature which has been obtained by the gluing theorem
of A.D.~Alexandrov (Corollary~\ref{prpr1} in Section~\ref{sec:3}).
These results provide a generalization of the evolution of convex
quadrilaterals which has been introduced in \cite{Zachos/Zou:88}
on surfaces of constant Gaussian curvature.



\section{The 4-inverse weighted Fermat-Torricelli problem on a complete convex surface with bounded specific curvature}
\label{sec:2}

\begin{problem}
Given a point $P$ which belongs to the interior of $ABCD$ on $M$,
does there exist a unique set of positive weights $w_{i},$ such
that
\begin{displaymath}
 w_{A}+w_{B}+w_{C}+w_{D} = c =const,
\end{displaymath}
for which P minimizes
\begin{displaymath}
 f(P)=w_{A}l_{A}+w_{B}l_{B}+w_{C}l_{C}+w_{D}l_{D}
\end{displaymath}
where $l_{R}$ is the length of the shortest arc from $P$ to the
vertex $R\in\{A,B,C,D\}.$

 This is the 4-inverse w.\,F-T problem on $M$
(Fig.~\ref{figureconvFTfirst}).
\end{problem}
For $w_{D}=0,$ we derive the 3-inverse w.\,F-T problem which has
been introduced and solved in $\mathbb{R}^{2}$ by S. Gueron and R.
Tessler in \cite{Gue/Tes:02} and it has been solved by
\cite{Zach/Zou:08} on a surface of constant Gaussian Curvature and
generalized in \cite{Zachos/Cots:10}, \cite{Cots/Zach:11} on a
$C^{2}$ surface. The solution of the 3-inverse weighted
Fermat-Torricelli problem gives a positive answer on this problem
on a $C^{2}$ surface. It is called the 3-inverse normalized
weighted Fermat-Torricelli problem for $c=1$ (see
\cite{Gue/Tes:02}, page 449). We show that a solution of the
4-inverse weighted Fermat-Torricelli problem (
Corollary~\ref{corol}) gives a negative answer to this problem
because three weights depend on one variable weight (
Theorem~\ref{T-01}).

An application of the solution of the 4-inverse w.\,F-T problem is
the derivation of the evolution of convex quadrilaterals on $M$.
The evolution of convex quadrilaterals is given by the invariance
of the weighted Fermat-Torricelli point for given convex
quadrilaterals (geometric plasticity) and the plasticity of convex
quadrilaterals which are convex quadrilaterals which satisfy
Theorem~\ref{T-01} (dynamic plasticity). We note that the
geometric plasticity was known to Viviani for the case of n given
points in $\mathbb{R}^{2}.$
\begin{theorem}\label{T-01}
 Consider the 4-inverse w.\,F-T problem on a complete convex surface of bounded specific curvature $M$ in $\mathbb{R}^{3}.$
The plasticity of convex quadrilaterals on $M$ is given by the
following three equations:
\begin{eqnarray} \label{plastic1}
 (\frac{w_B}{w_A})_{ABCD}=(\frac{w_B}{w_A})_{ABC}[1-(\frac{w_D}{w_A})_{ABCD}(\frac{w_A}{w_D})_{ACD}],\\
\label{plastic2}
 (\frac{w_C}{w_A})_{ABCD}=(\frac{w_C}{w_A})_{ABC}[1-(\frac{w_D}{w_A})_{ABCD}(\frac{w_A}{w_D})_{ABD}],
\end{eqnarray}
and
\begin{equation}\label{invcond4}
 (w_{A})_{ABCD}+(w_{B})_{ABCD}+(w_{C})_{ABCD}+(w_{D})_{ABCD}=const.
\end{equation}
 The weight $(w_R)_{ABCD}$ corresponds to the vertex $R$ that
lie on the shortest arc $P_{F}R$, for $R\in\{A,B,C,D\}$ and the
weight $(w_S)_{SLN}$ corresponds to the vertex $S$ that lie on the
shortest arc $P_{F}S$ regarding the triangle $\triangle SLN$, for
$S,L,N\in\{A,B,C,D\}$ and $S\ne L\ne N.$
\end{theorem}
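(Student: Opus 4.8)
\emph{Proof proposal.} The plan is to reduce the statement to the first‑order equilibrium condition at the weighted Fermat--Torricelli point $P_F$ and then to the linear algebra of four unit vectors in a plane. First I would invoke the differentiation procedure for shortest arcs of \cite{Cots/Zach:11,Zachos/Cots:10}: under the standing hypothesis $K_1<\kappa(W)<K_2$, Alexandrov's results (1), (2), (5) recalled above guarantee that from $P_F$ a shortest arc issues in every direction, that it depends $C^1$ on its endpoint, and that the curvature measure has no atom at $P_F$, so $M$ carries a genuine tangent plane $T_{P_F}M$; consequently each $l_R$ is differentiable at $P_F$ with gradient $-U_R$, where $U_R\in T_{P_F}M$ is the unit tangent at $P_F$ of the shortest arc $P_FR$. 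Since $P_F$ lies in the interior of $ABCD$, minimality of $f$ forces $\nabla f(P_F)=0$, i.e.
\[
 (w_A)_{ABCD}\,U_A+(w_B)_{ABCD}\,U_B+(w_C)_{ABCD}\,U_C+(w_D)_{ABCD}\,U_D=0 .
\]
Applying the same differentiation to the $3$-inverse w.\,F-T problem on each of the triangles $\triangle ABC$, $\triangle ACD$, $\triangle ABD$ gives, with the weights that make $P_F$ their weighted Fermat--Torricelli point, the equilibria $(w_A)_{ABC}U_A+(w_B)_{ABC}U_B+(w_C)_{ABC}U_C=0$ and the two analogous identities; only weight \emph{ratios} will enter below, so the normalisation of the triangle problems is irrelevant.

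Since $P_F$ is interior, $U_A$ and $U_C$ are linearly independent in $T_{P_F}M$. From the $\triangle ABC$ and $\triangle ACD$ equilibria I would write $U_B=-(w_A/w_B)_{ABC}U_A-(w_C/w_B)_{ABC}U_C$ and $U_D=-(w_A/w_D)_{ACD}U_A-(w_C/w_D)_{ACD}U_C$, substitute both into the $ABCD$-equilibrium, and equate the coefficients of $U_A$ and of $U_C$ to zero. Dividing the $U_A$-equation by $(w_A)_{ABCD}$ gives \eqref{plastic1} outright, while the $U_C$-equation is the auxiliary relation $(w_C)_{ABCD}=(w_C/w_B)_{ABC}(w_B)_{ABCD}+(w_C/w_D)_{ACD}(w_D)_{ABCD}$.

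To obtain \eqref{plastic2} I would substitute \eqref{plastic1} into this auxiliary relation, divide by $(w_A)_{ABCD}$, cancel the term $(w_C/w_A)_{ABC}$ common to both sides and then the factor $(w_D/w_A)_{ABCD}$; the outcome coincides with \eqref{plastic2} exactly when
\[
 (w_C/w_A)_{ABC}\,(w_A/w_D)_{ABD}=(w_C/w_A)_{ABC}\,(w_A/w_D)_{ACD}-(w_C/w_D)_{ACD}.
\]
I would settle this remaining identity by noting that the equilibrium of three fixed unit vectors pins down their coefficients, up to one common factor, as the $2\times2$ determinants of the other two, e.g.\ $(w_A):(w_B):(w_C)_{ABC}=[U_B,U_C]:[U_C,U_A]:[U_A,U_B]$ and likewise for $\triangle ACD$ and $\triangle ABD$, where $[\,\cdot\,,\,\cdot\,]$ denotes the area form on $T_{P_F}M$; substituting these expressions, the displayed identity collapses to the Grassmann--Pl\"ucker three-term relation $[U_A,U_B][U_C,U_D]-[U_A,U_C][U_B,U_D]+[U_A,U_D][U_B,U_C]=0$, valid for any four vectors in a two-dimensional space. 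Finally \eqref{invcond4} is nothing but the normalisation constraint built into the statement of the $4$-inverse w.\,F-T problem, so it requires no proof; together the three relations are the asserted plasticity equations, and one reads off that they leave precisely one weight free.

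The genuinely delicate part is the first step: on a convex surface of bounded specific curvature the metric need not be $C^2$, so one must justify that $l_R$ is still differentiable at $P_F$ with gradient $-U_R$ (first variation of the length of a \emph{shortest arc}, not of a smooth geodesic) and that at the interior minimiser the one-sided directional derivatives of $f$ assemble into a vanishing gradient — this is exactly what the machinery of \cite{Cots/Zach:11,Zachos/Cots:10} together with Alexandrov's theorems (1)--(5) and the perimeter bound $<\tfrac{2\pi}{\sqrt{K_1}}$ is designed to deliver (shortest arcs in every direction, local uniqueness, $C^1$-dependence on the endpoint, a well-defined tangent plane at $P_F$). A second, milder point to record is the non-degeneracy of the configuration — no two of the four shortest arcs issuing from $P_F$ share a tangent line — which is what makes all the ratios in the two preceding paragraphs, and hence the weights $(w_S)_{SLN}$, well defined; once these are in hand the remainder is the elementary planar linear algebra carried out above.
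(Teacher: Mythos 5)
Your proposal is correct and its analytic core coincides with the paper's: both rest on the first--variation formula for the length of a shortest arc (Burago--Burago--Ivanov / Alexandrov) applied at an interior minimiser $P_F$ with full angle $2\pi$, which yields the force--balance condition $\sum_R w_R U_R=0$ in $T_{P_F}M$; this vector form is exactly the reformulation the paper records in its Remark~2, whose inner and wedge products with the $U_R$ reproduce the paper's cosine equations \eqref{equation1t}--\eqref{equation4tb} and sine equations \eqref{ratio1t}--\eqref{ratio3t}. Where you genuinely diverge is the linear algebra that converts the equilibrium into \eqref{plastic1}--\eqref{plastic2}. The paper eliminates pairs of weights to get the three sine relations and then identifies each sine ratio with a triangle weight ratio by setting one quadrilateral weight to zero (using the symmetric point $D^{\star}$ to make sense of $(w_A/w_D)_{ACD}$ when $P_F$ is not interior to $\triangle ACD$); for \eqref{plastic2} it simply reads off \eqref{ratio1t} the same way. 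You instead decompose $U_B,U_D$ in the basis $\{U_A,U_C\}$ via the triangle equilibria, obtain \eqref{plastic1} from the $U_A$-coefficient, and reach \eqref{plastic2} through an auxiliary $U_C$-relation closed by the Grassmann--Pl\"ucker identity $[U_A,U_B][U_C,U_D]-[U_A,U_C][U_B,U_D]+[U_A,U_D][U_B,U_C]=0$. Your route makes the purely projective nature of the two plasticity equations transparent (and quietly absorbs the sign issue the paper handles with $D^{\star}$, since only the algebraic solution of each $3\times 2$ system enters), at the cost of a more roundabout derivation of \eqref{plastic2} than the paper's direct use of \eqref{ratio1t}; conversely the paper's sine-ratio bookkeeping is what it later reuses in the proof of Theorem~\ref{plasticityprincipleM}, so the two presentations are complementary rather than one subsuming the other. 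The only caveat worth recording is that your identification of the triangle ratios, like the paper's, presupposes the non-degeneracy you flag at the end (no two of $U_A,U_B,U_C,U_D$ collinear), and that the ``triangle weights'' $(w_S)_{SLN}$ must be understood as the possibly sign-indefinite solutions of the corresponding $3$-point equilibria.
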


\begin{proof}[Proof of Theorem~\ref{T-01}:]
Firstly, we assume that we choose four initial (given) values
$(w_{R})_{ABCD}(0)$ concerning the weights $(w_{R})_{ABCD}$ for
$R\in\{A,B,C,D\}$ such that the w.\,F-T point $P_{F}$ exists and
it is located at the interior of $ABCD$.
Differentiation of the objective function with respect to a
specific arc length, yields the system
\[
 \sum_{Q\in\{A,B,C,D\}}w_{Q}\cos(\angle QP_{F}R)=0\qquad\mbox{for}\quad R\in\{A,B,C,D\}.
 \]
Without loss of generality,  we choose $Q\in\{A,B,C,D\}$. We
suppose that the point $Q$, the arc length parameterized curve
$c(t)$ and the point $P_{F}$ that lie in $c(t)$ are given in the
neighborhood $W_{P_{F}}.$


We apply the following general result given in \cite[Corollary
4.5.7, p.~125, Remark~4.5.8, p.~126 and Theorem 4.5.6,
p.~123]{Burago:01} that deals with a rule for differentiating the
length of a continuous family of shortest arcs connecting $Q$ to
points $c(t),$ for $Q\in\{A,B,C,D\}$ and the existence of the
right derivative $\frac{d}{dt}l_{Q}$, although the shortest arcs
from $Q$ to $c(t)$ may not be unique on a non-negatively curved
complete locally compact space $X$ ($M$ is a subset of $X$):

\vskip1mm Let $c(t):[0,T]\to M$ be arc length parameterized
geodesic, $Q$ a fixed point in $M$, $Q\ne c(0)$. Then the function
$t\to l_{Q}(t)=|Qc(t)|$ has the right derivative and
\begin{equation}\label{compute partialnn}
 \lim_{t\to+0}\frac{l_{Q}(t)-l_{Q}(0)}{t}
 =\cos(\pi-\varphi_{c,\min}^{\prime}(t))=\cos(\pi-\varphi_{c}^{\prime}(t)),
\end{equation}
where $\varphi_{c,\min}^{\prime}(t)$ is the infimum (minimum) of
angles between $c(t)$ and the shortest arcs connecting $c(0)$ to
$Q$ and takes a unique value $\varphi_{c}^{\prime}(t)$ on $M.$

We choose the parametrization (a similar technique has been used
in \cite{Cots/Zach:11}, \cite{Zachos/Cots:10} for the
parametrization of length of geodesics):
\begin{equation}\label{compute partialparametert}
 l_{A}(t)=t,
\end{equation}
We assume that the shortest arcs $l_{B}$, $l_{C},$ $l_{D},$  can
be expressed as functions of $l_{A}$:
\begin{equation}\label{eq:a_22}
 l_{B}=l_{B}(l_{A}),\quad l_{C}=l_{C}(l_{A}), \quad l_{D}=l_{D}(l_{A}).
\end{equation}
From (\ref{eq:a_22}) and (\ref{minimumcs}) the following equation
is obtained:
\begin{equation}\label{eq:B_2f_11}
 w_{A}l_{A}+w_{B}l_{B}(l_{A})+w_{C}l_{C}(l_{A})+w_{D}l_{D}(l_{A})= \min
\end{equation}
Taking into account that the point $P_{F}$ is the w.\,F-T point of
$ABCD$ we derive that the right derivative of (\ref{eq:B_2f_11})
with respect to the variable $l_{A}$ and considering (\ref{compute
partialparametert}), we get
\begin{equation} \label{eq:B221fract}
 w_{A}+w_{B}\lim_{t\to +0}
 \frac{l_{B}(t)-l_{B}(0)}{l_{A}(t)}+w_{C}\lim_{t\to +0}
 \frac{l_{C}(t)-l_{C}(0)}{l_{A}(t)}+w_{D}\lim_{t\to +0}
 \frac{l_{D}(t)-l_{D}(0)}{l_{A}(t)}=0.
\end{equation}
From (\ref{compute partialnn}) and (\ref{compute
partialparametert}), for $Q=B,$ we get
\begin{equation}\label{partialimp2t}
 \lim_{t\to +0}\frac{l_{B}(t)-l_{B}(0)}{l_{A}(t)}=\cos(\angle AP_{F}B).
\end{equation}
Let the point $C$, the length parameterized curve $c(t)$ and the
point $P_{F}$ that belongs in $c(t)$ be given in the neighborhood
$W_{P_{F}}$. Taking into account (\ref{compute partialnn}) and
(\ref{compute partialparametert}), for $Q=C,$ we have
(\cite[Corollary 4.5.7, p.~125, Remark 4.5.8, p.~126, Theorem
4.5.6, p.~123]{Burago:01}, \cite[Problem 1.5.3, p.~16 and Lemma
3.5.1 and Remark 3.5.1, pp.~164--165]{VToponogov:05})
\begin{equation}\label{partialimp3t}
 \lim_{t\to +0}\frac{l_{C}(t)-l_{C}(0)}{l_{A}}=\cos(\angle AP_{F}C).
\end{equation}
Similarly, let the point $D$, the length parameterized curve
$c(t)$ and the point $P_{F}$ that belongs in $c(t)$ be given in
the neighborhood $W_{P_{F}}$. Taking into account (\ref{compute
partialnn}) and (\ref{compute partialparametert}), for $Q=D$, we
have
\begin{equation}\label{partialimp4tb}
 \lim_{t\to +0}\frac{l_{D}(t)-l_{D}(0)}{l_{A}}=\cos(\angle AP_{F}D).
\end{equation}
Replacing (\ref{partialimp2t})\,--\,(\ref{partialimp4tb}) in
(\ref{eq:B221fract}), we obtain
\begin{equation}\label{equation1t}
 w_{A}+w_{B}\cos(\angle AP_{F}B)=-w_{C}\cos(\angle
 AP_{F}C)-w_{D}\cos(\angle AP_{F}D).
\end{equation}
Similarly, working cyclically and differentiating
(\ref{minimumcs}) with respect to $l_{B}$, and choosing the
parametrization $l_{B}(t^{\prime})=t^{\prime},$ by differentiating
(\ref{minimumcs}) with respect to $l_{C}$ and by choosing the
parametrization $l_{C}(t^{\prime\prime})=t^{\prime\prime}$ and by
differentiating (\ref{minimumcs}) with respect to $l_{D}$ and
choosing the parametrization
$l_{D}(t^{\prime\prime\prime})=t^{\prime\prime\prime}$, we derive
three relations, respectively,
\begin{eqnarray}\label{equation2t}
 w_{A}\cos(\angle BP_{F}A)+w_{B}=-w_{C}\cos(\angle BP_{F}C)-w_{D}\cos(\angle BP_{F}D),\\
\label{equation3t}
 w_{A}\cos(\angle CP_{F}A)+w_{B}\cos(\angle CP_{F}B)=-w_{C}-w_{D}\cos(\angle CP_{F}D),\\
\label{equation4tb}
 w_{A}\cos(\angle DP_{F}A)+w_{B}\cos(\angle DP_{F}B)=-w_{C}\cos(\angle DP_{F}C)-w_{D}.
\end{eqnarray}
The equations (\ref{equation1t})\,--\,(\ref{equation4tb}) could be
written in a unified form
\[
 \sum_{Q\in\{A,B,C,D\}}w_{Q}\cos(\angle QP_{F}R)=0\qquad\mbox{for}\quad R\in\{A,B,C,D\}.
\]
The points of $M$ are distinguished into three categories (see
\cite[p.~386]{Alexandrov:06}):

(1) conical points at which the tangent cone has complete angle
less than $2 \pi$,

(2) edge points at which the tangent cone is dihedral angle,

(3) ''smooth" points at which the tangent cone is a plane.

\vskip1.5mm
 The point $P_{F}$  which is the intersection of four prescribed shortest
 arcs in $M$ can be only a smooth point and it cannot be a conical point or an edge point, due to a result of A.D.~Alexandrov in \cite[Theorem 5, p.~135]{Alexandrov:06}.
 Therefore, the complete angle of $P_{F}$ is given by the following formula:
\begin{equation}\label{anglesumt}
 \angle AP_{F}B+\angle BP_{F}C+\angle CP_{F}D+\angle DP_{F}A= 2\pi.
\end{equation}
Taking into account the 4-inverse w.\,F-T problem, the weights
$(w_{R})_{ABCD}=w_{R}$ for $R\in\{A,B,C,D\}$ become variable
weights which satisfy (\ref{equation1t}) --
(\ref{equation4tb}).

 Taking into consideration the trigonometric identity
\[
 1-\cos(\angle AP_{F}B)\cos(\angle BP_{F}A)=-\sin(\angle AP_{F}B)\sin(\angle BP_{F}A)
\]
and the orientation of the angles with respect to $P_{F}$ taken
counterclockwise (see Fig.~\ref{figureconvFTfirst})  we solve the
linear system  (\ref{equation1t}) -- (\ref{equation2t}) with
respect to $w_{A}$ and $w_{B}$:
\begin{eqnarray}\label{ratio1t}
 w_{A}\sin(\angle BP_{F}A)+w_{C}\sin(\angle BP_{F}C)+w_{D}\sin(\angle BP_{F}D)=0,\\
 \label{ratio2t}
 w_{B}\sin(\angle AP_{F}B)+w_{C}\sin(\angle AP_{F}C)+w_{D}\sin(\angle AP_{F}D)=0.
\end{eqnarray}
 Similarly, solving the linear system (\ref{equation1t}) and (\ref{equation3t}) with respect to
 $w_{A},$ $w_{C},$ we obtain
\begin{equation}\label{ratio3t}
 w_{A}\sin(\angle CP_{F}A)+w_{B}\sin(\angle CP_{F}B)+w_{D}\sin(\angle CP_{F}D)=0.
\end{equation}
We write (\ref{ratio3t}) in the following form:
\[
 (\frac{w_B}{w_A})_{ABCD}=-\frac{\sin(\angle CP_{F}A)}{\sin(\angle CP_{F}B)} [(\frac{w_D}{w_A})_{ABCD}\frac{\sin(\angle CP_{F}D)}{\sin(\angle CP_{F}A)}+1].
\]
If we set $(w_{D})_{ABCD}=0,$ in (\ref{ratio1t}), (\ref{ratio2t})
and (\ref{ratio3t}), we obtain a solution of the 3-inverse w.\,F-T
problem for the triangle $\triangle ABC$ (see
\cite{Zachos/Cots:10}, for a solution of this problem on a
$C^{2}$-regular surface) and we get
\[
 -\frac{\sin(\angle CP_{F}A)}{\sin(\angle CP_{F}B)}=(\frac{w_B}{w_A})_{ABC}.
\]
Similarly, if we set $(w_{B})_{ABCD}=0,$ in
(\ref{ratio1t})\,--\,(\ref{ratio3t}), we obtain a solution of the
3-inverse w.\,F-T problem for the triangle $\triangle
ACD^{\star}$, where ($D^{\star}$ is the symmetric point of $D$
with respect to $P_{F}$, and we get
\[
 -(\frac{w_A}{w_D})_{ACD^{\star}}=\frac{\sin(\angle CP_{F}D^{\star})}{\sin(\angle CP_{F}A)}=-\frac{\sin(\angle CP_{F}D)}{\sin(\angle CP_{F}A)}=(\frac{w_A}{w_D})_{ACD},
\]
which gives (\ref{plastic1}).

We proceed by deriving (\ref{plastic2}).
We write (\ref{ratio1t}) in the form
\[
 (\frac{w_C}{w_A})_{ABCD}=-\frac{\sin(\angle BP_{F}A)}{\sin(\angle BP_{F}C)} (1+(\frac{w_D}{w_A})_{ABCD}\frac{\sin(\angle BP_{F}D)}{\sin(\angle BP_{F}A)}).
\]
If we set $(w_{D})_{ABCD}=0,$ in (\ref{ratio1t}) --
(\ref{ratio3t}), we obtain a solution of the 3-inverse w.\,F-T
problem for the triangle $\triangle ABC$ and we get
\[
 (\frac{w_C}{w_A})_{ABC}=-\frac{\sin(\angle BP_{F}A)}{\sin(\angle BP_{F}C)}.
\]
Similarly, if we set $(w_{C})_{ABCD}=0,$ in
(\ref{ratio1t})\,--\,(\ref{ratio3t}), we obtain a solution of the
3-inverse w.\,F-T problem for the triangle $\triangle ABD$), and
we get
\[
 (\frac{w_A}{w_D})_{ABD}=-\frac{\sin(\angle BP_{F}D)}{\sin(\angle BP_{F}A)},
\]
which yields (\ref{plastic2}).
\end{proof}

As a direct consequence of Theorem~\ref{T-01} we obtain

\begin{corollary}\label{corol} If $\,\sum_{ABCD}w=\sum_{ABC}w=\sum_{ABD}w=\sum_{ACD}w=\sum_{BCD}w$, where
$$\sum\nolimits_{ABCD}w:=(w_A)_{ABCD}(1+\frac{w_B}{w_A}+\frac{w_C}{w_A}+\frac{w_D}{w_A})_{ABCD},$$
then
\begin{equation}\label{E-wabcd}
 (w_i)_{ABCD}=a_i (w_D)_{ABCD}+ b_i,\quad i\in\{A,B,C\},
\end{equation}
where
\begin{eqnarray}\label{E-ab}
\nonumber
 (a_A,\,b_A)\eq  (\frac{(\frac{w_A}{w_D})_{ACD}(\frac{w_B}{w_A})_{ABC}
 +(\frac{w_A}{w_D})_{ABD}(\frac{w_C}{w_A})_{ABC}-1}
 {1+(\frac{w_B}{w_A})_{ABC}+(\frac{w_C}{w_A})_{ABC}},\ (w_A)_{ABC}),\\
\nonumber
 (a_B,\,b_B)\eq  (a_{A}(\frac{w_B}{w_A})_{ABC}-(\frac{w_A}{w_D})_{ACD}(\frac{w_B}{w_A})_{ABC},\ (w_B)_{ABC}),\\
 (a_C,\,b_C)\eq  (a_{A}(\frac{w_C}{w_A})_{ABC}-(\frac{w_A}{w_D})_{ABD}(\frac{w_C}{w_A})_{ABC},\ (w_C)_{ABC}).
\end{eqnarray}
\end{corollary}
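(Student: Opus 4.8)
The plan is to read Corollary~\ref{corol} as a purely linear-algebraic consequence of the three plasticity relations \eqref{plastic1}, \eqref{plastic2}, \eqref{invcond4} furnished by Theorem~\ref{T-01}, combined with the normalization hypothesis $\sum_{ABCD}w=\sum_{ABC}w$. First I would clear denominators in \eqref{plastic1} and \eqref{plastic2}: multiplying each of them by $(w_A)_{ABCD}$ and using the tautology $(\frac{w_D}{w_A})_{ABCD}\,(w_A)_{ABCD}=(w_D)_{ABCD}$ turns the two ratio identities into the affine relations
\begin{align*}
 (w_B)_{ABCD}&=(\tfrac{w_B}{w_A})_{ABC}(w_A)_{ABCD}-(\tfrac{w_A}{w_D})_{ACD}(\tfrac{w_B}{w_A})_{ABC}(w_D)_{ABCD},\\
 (w_C)_{ABCD}&=(\tfrac{w_C}{w_A})_{ABC}(w_A)_{ABCD}-(\tfrac{w_A}{w_D})_{ABD}(\tfrac{w_C}{w_A})_{ABC}(w_D)_{ABCD},
\end{align*}
so that $(w_B)_{ABCD}$ and $(w_C)_{ABCD}$ are already affine in the pair $\big((w_A)_{ABCD},(w_D)_{ABCD}\big)$, with coefficients assembled from exactly the triangle ratios occurring in \eqref{E-ab}.

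Next I would feed these two expressions into the conservation law \eqref{invcond4}. Grouping the terms proportional to $(w_A)_{ABCD}$ and to $(w_D)_{ABCD}$ collapses \eqref{invcond4} to a single scalar identity
\begin{align*}
 &\big(1+(\tfrac{w_B}{w_A})_{ABC}+(\tfrac{w_C}{w_A})_{ABC}\big)(w_A)_{ABCD}\\
 &\quad+\big(1-(\tfrac{w_A}{w_D})_{ACD}(\tfrac{w_B}{w_A})_{ABC}-(\tfrac{w_A}{w_D})_{ABD}(\tfrac{w_C}{w_A})_{ABC}\big)(w_D)_{ABCD}=\mathrm{const},
\end{align*}
whose right-hand constant is precisely $\sum_{ABCD}w$. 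Solving for $(w_A)_{ABCD}$ yields $(w_A)_{ABCD}=a_A(w_D)_{ABCD}+b_A$ with $a_A$ as stated in \eqref{E-ab} and $b_A=\sum_{ABCD}w\big/\big(1+(\tfrac{w_B}{w_A})_{ABC}+(\tfrac{w_C}{w_A})_{ABC}\big)$. Here the hypothesis enters: since $\sum_{ABC}w=(w_A)_{ABC}\big(1+(\tfrac{w_B}{w_A})_{ABC}+(\tfrac{w_C}{w_A})_{ABC}\big)$ by the very definition of $\sum_{ABC}w$, the equality $\sum_{ABCD}w=\sum_{ABC}w$ reduces this fraction to $b_A=(w_A)_{ABC}$, matching \eqref{E-ab}; the remaining equalities $\sum_{ABD}w=\sum_{ACD}w=\sum_{BCD}w$ serve to make the weights of the several sub-triangle $3$-inverse problems mutually compatible, so that every quantity occurring in \eqref{E-ab} is referred to one and the same total weight.

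Finally I would back-substitute $(w_A)_{ABCD}=a_A(w_D)_{ABCD}+b_A$ into the two affine expressions for $(w_B)_{ABCD}$ and $(w_C)_{ABCD}$ obtained in the first step. This produces $(w_i)_{ABCD}=a_i(w_D)_{ABCD}+b_i$ for $i\in\{B,C\}$, with $a_B=a_A(\tfrac{w_B}{w_A})_{ABC}-(\tfrac{w_A}{w_D})_{ACD}(\tfrac{w_B}{w_A})_{ABC}$ and $a_C=a_A(\tfrac{w_C}{w_A})_{ABC}-(\tfrac{w_A}{w_D})_{ABD}(\tfrac{w_C}{w_A})_{ABC}$ exactly as in \eqref{E-ab}, while the constant terms simplify, using $b_A=(w_A)_{ABC}$, to $b_B=(\tfrac{w_B}{w_A})_{ABC}(w_A)_{ABC}=(w_B)_{ABC}$ and $b_C=(\tfrac{w_C}{w_A})_{ABC}(w_A)_{ABC}=(w_C)_{ABC}$. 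The \emph{main obstacle} is not the elimination, which is routine linear substitution, but pinning down the bookkeeping: one must verify that the ``$\mathrm{const}$'' in \eqref{invcond4} really is $\sum_{ABCD}w$ and that the normalization hypothesis legitimately rewrites it through the triangle data $(w_A)_{ABC},(\tfrac{w_B}{w_A})_{ABC},(\tfrac{w_C}{w_A})_{ABC}$; once that identification is secured, Corollary~\ref{corol} drops out.
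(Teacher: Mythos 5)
Your derivation is correct and is precisely the elimination the paper intends when it calls Corollary~\ref{corol} a ``direct consequence'' of Theorem~\ref{T-01}: clear denominators in \eqref{plastic1}--\eqref{plastic2}, substitute into \eqref{invcond4} to solve for $(w_A)_{ABCD}$, and identify $b_A=(w_A)_{ABC}$ via the normalization $\sum_{ABCD}w=\sum_{ABC}w$. Your observation that only this one equality of sums is actually needed for the stated formulas (the remaining equalities merely fix a common normalization for the sub-triangle data) is accurate, and all coefficients you obtain match \eqref{E-ab}.
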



We continue by proving the main result, which we call \textit{a
plasticity principle of convex quadrilaterals on $M$}:

\begin{theorem}\label{plasticityprincipleM}
Given four shortest arcs which meet at the weighted
Fermat-Torricelli point $P_{F}$ and their endpoints form a convex
quadrilateral on $M$ and the weighted Fermat-Torricelli point
belongs to the interior of this convex quadrilateral, an increase
of the weight that corresponds to a shortest arc causes a decrease
to the two weights that correspond to the two neighboring shortest
arcs and an increase to the weight that corresponds to the
opposite shortest arc.
\end{theorem}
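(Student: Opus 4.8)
The plan is to reduce the statement to a short orientation computation in the tangent plane at $P_{F}$. Since $P_{F}$ is a smooth point of $M$ (shown in the proof of Theorem~\ref{T-01}), the tangent plane $T_{P_{F}}M$ is $2$-dimensional; let $u_{R}\in T_{P_{F}}M$ be the initial unit vector of the shortest arc $P_{F}R$, $R\in\{A,B,C,D\}$. The first-order condition $\sum_{Q}w_{Q}\cos(\angle QP_{F}R)=0$ for every $R$, established in the proof of Theorem~\ref{T-01}, says precisely that the vector $\sum_{Q}w_{Q}u_{Q}$ is orthogonal to every $u_{R}$, hence, the $u_{R}$ spanning $T_{P_{F}}M$, that $w_{A}u_{A}+w_{B}u_{B}+w_{C}u_{C}+w_{D}u_{D}=0$. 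Thus an admissible weight vector of the $4$-inverse w.\,F-T problem is exactly one satisfying this vector identity (two scalar equations) together with $w_{A}+w_{B}+w_{C}+w_{D}=c$. Moreover, since $P_{F}$ lies in the interior of the convex quadrilateral $ABCD$ and $\angle AP_{F}B+\angle BP_{F}C+\angle CP_{F}D+\angle DP_{F}A=2\pi$ by (\ref{anglesumt}), the vectors $u_{A},u_{B},u_{C},u_{D}$ are pairwise distinct and occur in a definite cyclic order on the unit circle of $T_{P_{F}}M$; in particular no three of them are collinear, and, orienting $T_{P_{F}}M$ so that $A,B,C,D$ are counterclockwise, the triangles $u_{A}u_{B}u_{C}$, $u_{A}u_{B}u_{D}$, $u_{A}u_{C}u_{D}$, $u_{B}u_{C}u_{D}$ are all positively oriented. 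Here the shortest arcs $P_{F}A$ and $P_{F}C$ are the two arcs neighboring $P_{F}D$, while $P_{F}B$ is the arc opposite to $P_{F}D$.

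By Theorem~\ref{T-01}, equations (\ref{plastic1}), (\ref{plastic2}) and (\ref{invcond4}) together express, along the one-parameter admissible family, the weights $(w_{A})_{ABCD},(w_{B})_{ABCD},(w_{C})_{ABCD}$ as affine functions of the free weight $(w_{D})_{ABCD}$; write $(w_{R})_{ABCD}=a_{R}(w_{D})_{ABCD}+b_{R}$ for $R\in\{A,B,C\}$ (cf. Corollary~\ref{corol}). The claim for an increase of $(w_{D})_{ABCD}$ is then precisely $a_{A}<0$, $a_{C}<0$, $a_{B}>0$. To determine these signs I differentiate the identities $w_{A}u_{A}+w_{B}u_{B}+w_{C}u_{C}+w_{D}u_{D}=0$ and $w_{A}+w_{B}+w_{C}+w_{D}=c$ — which hold identically along the family, with $P_{F}$ and hence every $u_{Q}$ fixed — with respect to $(w_{D})_{ABCD}$, obtaining
\[
 a_{A}u_{A}+a_{B}u_{B}+a_{C}u_{C}=-u_{D}\qquad\text{and}\qquad a_{A}+a_{B}+a_{C}=-1 .
\]
Lifting to $\mathbb{R}^{3}$ via $\hat u_{Q}:=(u_{Q},1)$ merges these two into $a_{A}\hat u_{A}+a_{B}\hat u_{B}+a_{C}\hat u_{C}=-\hat u_{D}$; since $u_{A},u_{B},u_{C}$ are non-collinear, $\{\hat u_{A},\hat u_{B},\hat u_{C}\}$ is a basis of $\mathbb{R}^{3}$ and Cramer's rule gives, with $\Delta_{XYZ}:=\det(\hat u_{X},\hat u_{Y},\hat u_{Z})$,
\[
 a_{A}=-\frac{\Delta_{BCD}}{\Delta_{ABC}},\qquad a_{B}=\frac{\Delta_{ACD}}{\Delta_{ABC}},\qquad a_{C}=-\frac{\Delta_{ABD}}{\Delta_{ABC}} .
\]
Each $\Delta_{XYZ}$ is twice the signed area of the triangle $u_{X}u_{Y}u_{Z}$, so by the cyclic-position remark $\Delta_{ABC},\Delta_{ABD},\Delta_{ACD},\Delta_{BCD}$ are all strictly positive; hence $a_{A}<0$, $a_{C}<0$ and $a_{B}>0$, which is the assertion for an increase of $w_{D}$. (The same signs can alternatively be read off the explicit formulas (\ref{E-ab}) of Corollary~\ref{corol} once the signs of the sub-triangle sine factors are tracked.) Since $a_{A}+a_{B}+a_{C}=-1$ and the weights are currently strictly positive, the two decreasing weights stay positive and $w_{A}+w_{B}+w_{C}+w_{D}=c$ is preserved over a whole interval of increased $(w_{D})_{ABCD}$, so $P_{F}$ remains the w.\,F-T point of $ABCD$. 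Finally, the labeling of $ABCD$ and the choice of the free weight were arbitrary, so the same computation after a cyclic relabeling gives the statement for an increase of any one of the four weights; this is the plasticity principle.

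The one place where the geometry of the convex surface genuinely enters — and the step I would treat most carefully — is the claim that $u_{A},u_{B},u_{C},u_{D}$ lie in convex position on the unit circle of $T_{P_{F}}M$, in the boundary cyclic order of $ABCD$. This rests on $P_{F}$ being a smooth (non-conical, non-edge) point, on the local non-branching of shortest arcs issuing from $P_{F}$, and on the angle-sum identity (\ref{anglesumt}), all of which are already available from the proof of Theorem~\ref{T-01} and from the Alexandrov results recalled in the introduction; everything after that reduction is the elementary linear algebra above.
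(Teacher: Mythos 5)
Your argument is correct, but it is not the paper's argument. The paper proves $a_A<0$, $a_C<0$, $a_B>0$ by working with the explicit sine-ratio formulas of Corollary~\ref{corol}: it rewrites $a_A$ using the symmetric point $C^{\star}$ of $C$ through $P_F$ so that $a_A=-(w_A)_{ABC}/(w_D)_{BCD}<0$, deduces $a_C<0$ from positivity of the sub-triangle weights, and obtains $a_B>0$ from formula (\ref{wb}) by a term-by-term sign analysis of sines of oriented angles together with a small trigonometric estimate for the numerator. You instead package the stationarity conditions as the single vector identity $\sum_Q w_Q u_Q=0$ in $T_{P_F}M$ --- which is precisely the reformulation recorded in the paper's remark attributed to Rovenski for the planar case --- differentiate along the admissible line, and read off $a_A=-\Delta_{BCD}/\Delta_{ABC}$, $a_B=\Delta_{ACD}/\Delta_{ABC}$, $a_C=-\Delta_{ABD}/\Delta_{ABC}$ by Cramer's rule, with all four determinants positive because the unit directions $u_A,u_B,u_C,u_D$ occur counterclockwise on the unit circle. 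What your route buys: it avoids the paper's delicate sign conventions for sines of oriented angles exceeding $\pi$ and the auxiliary symmetric points; it yields $a_A+a_B+a_C=-1$ for free; it never divides by the sine of an individual angle and so degrades gracefully in the degenerate configurations the paper must treat separately (Proposition~\ref{spec1}, Example~\ref{spec2}); and it makes the relabeling symmetry needed to pass from ``increase $w_D$'' to ``increase any one weight'' transparent. What it costs: the identification $\cos(\angle QP_FR)=\langle u_Q,u_R\rangle$ and the convex-position claim for the $u_R$ must be justified on an Alexandrov convex surface (smoothness of $P_F$, well-defined directions of shortest arcs, the angle-sum identity); you correctly flag this as the only place the surface geometry enters, and these are exactly the facts the paper itself invokes, so nothing is missing. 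One cosmetic point: three distinct points on a circle are automatically non-collinear and the orientation of their triangle equals their cyclic order on the circle, so your ``no three collinear'' clause is already implied by distinctness of the four directions.
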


\begin{proof}[Proof of Theorem~\ref{plasticityprincipleM}:]
We take into account the three plasticity equations of
Theorem~\ref{T-01} and Corollary~\ref{corol} which are derived by
applying the conditions $\sum_{ABCD}
w:=(w_A)_{ABCD}(1+\frac{w_B}{w_A}+\frac{w_C}{w_A}+\frac{w_D}{w_A})_{ABCD}$
and $\sum_{ABCD} w=\sum_{ABC} w=\sum_{ABD} w=\sum_{ACD}
w=\sum_{BCD} w = const$ such that
(\ref{E-wabcd})\,--\,(\ref{E-ab}) hold. We prove that $a_{A},
a_{C}<0$ and $a_{B}>0.$ We calculate the coefficient $a_{A}$:
\begin{eqnarray*}
 a_{A}\eq \frac{(\frac{w_A}{w_D})_{ACD}(\frac{w_B}{w_A})_{ABC}+(\frac{w_A}{w_D})_{ABD}(\frac{w_C}{w_A})_{ABC}-1
 }{1+(\frac{w_B}{w_A})_{ABC}+(\frac{w_C}{w_A})_{ABC}}\\
 \eq \frac{\frac{\sin(\angle CP_{F}D)}{\sin(\angle CP_{F}A)}\frac{\sin(\angle CP_{F}A)}{\sin(\angle CP_{F}B)}+\frac{\sin(\angle BP_{F}D)}{\sin(\angle
 BP_{F}A)}\frac{\sin(\angle BP_{F}A)}{\sin(\angle BP_{F}C)} -1}{\frac{const}{(w_{A})_{ABC}}}\\
 \eq \frac{\frac{\sin(\angle C^{\star}P_{F}D)}{\sin(\angle C^{\star}P_{F}B)}
 -\frac{\sin(\angle BP_{F}D)}{\sin(\angle BP_{F}C^{\star})} -1}{\frac{const}{(w_{A})_{ABC}}}
 =\frac{ -(\frac{w_B}{w_D})_{BC^{\star}D}+(\frac{w_{C^{\star}}}{w_D})_{BC^{\star}D}-1       }  {\frac{const}{(w_{A})_{ABC}}}\\
 \eq \frac{ -(\frac{w_B}{w_D})_{BCD}-(\frac{w_C}{w_D})_{BCD}-1}{\frac{const}{(w_{A})_{ABC}}}
 =-\frac{(w_{A})_{ABC}}{(w_{D})_{BCD}}<0,
\end{eqnarray*}
 because $(w_{A})_{ABC}$ and $(w_{D})_{BCD}$ are positive numbers.
The point $C^{\star}$ is the symmetric point of $C$ with respect
to $P_{F}$ such that $l_{P_{F}}(C)=l_{P_{F}}(C^{\star})$. Taking
into account that $P_{F}$ is located at the interior of the
triangle $\triangle BC^{\star}D,$ we have
\[
 (w_{B})_{BC^{\star}D}+(w_{C^{\star}})_{BC^{\star}D}+(w_{D})_{BC^{\star}D}=c,
\]
where $(w_{R})_{BC^{\star}D}$ are positive numbers for
$R\in\{B,C^{\star},D\}$.

Concerning the triangle $\triangle BCD,$ $P_{F}$ is not located at
the interior of $P_{F}$ and by the relation
\[
 (w_{B})_{BCD}+(w_{C})_{BCD}+(w_{D})_{BCD}=c,
\]
we obtain
\[
 (w_{C})_{BCD}=-(w_{C^{\star}})_{BC^{\star}D}<0,\qquad(w_{B})_{BCD}, (w_{D})_{BCD}>0.
\]
Therefore,
\[
 a_{C}=a_A(\frac{w_C}{w_A})_{ABC}-(\frac{w_A}{w_D})_{ABD}(\frac{w_C}{w_A})_{ABC}<0.
\]
because $P_{F}$ is located at the interior of the triangles
$\triangle ABC,$ $\triangle ABD,$ which makes the weights
$(w_{A})_{ABC},$ $(w_{C})_{ABC},$ $(w_{A})_{ABD}$, $(w_{D})_{ABD}$
positive numbers.

We shall show $a_{B}>0$. From the 4-inverse w.\,F-T condition of
Problem 2 we~have
\begin{equation}\label{invwa}
(w_{A})_{ABCD}=c-(w_{B})_{ABCD}-(w_{C})_{ABCD}-(w_{D})_{ABCD}.
\end{equation}
Replacing (\ref{invwa}) in (\ref{ratio3t}), we get
\begin{eqnarray}\label{equation2wbwc}
 (w_{B})_{ABCD}(\sin(\angle CP_{F}B)-\sin(\angle CP_{F}A))
 +2(w_{C})_{ABCD}\sin(\angle AP_{F}C)=\nonumber\\
 w_{D}(\sin(\angle CP_{F}A)-\sin(\angle CP_{F}D))-c \sin(\angle CP_{F}A).
\end{eqnarray}
From (\ref{ratio2t}) and (\ref{equation2wbwc}) we derive that
\begin{eqnarray}\label{wb}
\nonumber
 (w_{B})_{ABCD}\eq \frac{\sin(\angle CP_{F}A)-\sin(\angle
 CP_{F}D)+2\sin(\angle AP_{F}D)}{\sin(\angle CP_{F}B)-\sin(\angle
 CP_{F}A)-2\sin(\angle AP_{F}B)} (w_{D})_{ABCD}\\
 &-&\frac{c \sin(\angle CP_{F}A)}{\sin(\angle CP_{F}B)-\sin(\angle CP_{F}A)-2\sin(\angle AP_{F}B)}.
\end{eqnarray}
Taking into account the counterclockwise orientation of angles
with respect to $P_{F}$ (see Fig.~\ref{figureconvFTfirst}), we get
\[
 \sin(\angle CP_{F}B)-\sin(\angle CP_{F}A)-2\sin(\angle AP_{F}B)<0,
\]
because
\[
 \sin(\angle CP_{F}B)<0,\quad
 -\sin(\angle CP_{F}A)<0,\quad
 -\sin(\angle AP_{F}B)<0.
\]
Similarly, taking into account the counterclockwise orientation of
angles with respect to $P_{F}$ (see Fig.~\ref{figureconvFTfirst}),
we get
\begin{eqnarray*}
 &&\hskip-5mm\sin(\angle CP_{F}A)-\sin(\angle CP_{F}D)+2\sin(\angle AP_{F}D)\\
 &&=-\sin(\angle AP_{F}C)-\sin(\angle CP_{F}D)+\sin(\angle AP_{F}C+\angle CP_{F}D)
 +\sin(\angle AP_{F}D)\\
 &&=\sin(\angle AP_{F}C)(\cos(\angle CP_{F}D)-1)
 +\sin(\angle CP_{F}D)(\cos(\angle AP_{F}C)-1)+\sin(\angle AP_{F}D)\\
 &&\le  \sin(\angle AP_{F}D)<0.
\end{eqnarray*}
Therefore, we have
\[
 a_{B}=\frac{\sin(\angle CP_{F}A)-\sin(\angle
 CP_{F}D)+2\sin(\angle AP_{F}D)}{\sin(\angle CP_{F}B)-\sin(\angle
 CP_{F}A)-2\sin(\angle AP_{F}B)}>0
\]
that completes the proof.
\end{proof}

 A numerical verification of the plasticity principle is given in \cite[Examples 4.7, 4.10, pp.~418--419]{Zachos/Zou:88}.

\begin{remark}\rm
Concerning the plasticity of convex quadrilaterals on $M$, we have
assumed that $P_{F}$ is located at the interior of $\triangle
ABC$. Then a fourth shortest arc "grows" from $P_{F}$ such that
its length reaches $l_{P_{F}}(D)$ and $ABCD$ is a convex
quadrilateral on $M$. Therefore, we consider that the 4-inverse
w.\,F-T problem is derived by the 3-inverse w.\,F-T problem on
$M$.
\end{remark}

We mention two particular cases (Proposition~\ref{spec1},
Example~\ref{spec2}) such that $P_{F}$ belongs to at least one of
the diagonals (shortest arc) of $ABCD$ where the 4-inverse w.\,F-T
problem is not derived by the 3-inverse w.\,F-T problem like in
Theorem~\ref{plasticityprincipleM}.

\begin{proposition}\label{spec1}
If $P_{F}$ belongs to the shortest arc $(B,D)$ and does not
necessarily belong to the shortest arc $(A,C)$, then the
plasticity principle of the convex quadrilateral $ABCD$ on $M$
holds.
\end{proposition}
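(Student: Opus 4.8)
\emph{Proof outline.} The plan is to re-derive the affine relations (\ref{E-wabcd}) directly from the equilibrium conditions and the $4$-inverse constraint --- the triangle sums in Corollary~\ref{corol} partly degenerate once $P_{F}$ lies on a diagonal --- and then to reduce the required sign pattern $a_{A},a_{C}<0<a_{B}$ to the single inequality $a_{B}>0$. First I record the geometry forced by the hypothesis: since $P_{F}$ lies on the shortest arc $(B,D)$, the arcs $P_{F}B$ and $P_{F}D$ leave $P_{F}$ in opposite directions, so $\angle BP_{F}D=\pi$; as $P_{F}$ is still a smooth interior point of $ABCD$, relation (\ref{anglesumt}) holds and gives $\angle BP_{F}C+\angle CP_{F}D=\pi=\angle DP_{F}A+\angle AP_{F}B$. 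The first-order conditions of Theorem~\ref{T-01} are unaffected, so (\ref{ratio1t})--(\ref{ratio3t}) and the constraint (\ref{invcond4}) remain valid; and since $\sin(\angle BP_{F}D)=0$, equation (\ref{ratio1t}) degenerates to $(w_{A})_{ABCD}\sin(\angle BP_{F}A)+(w_{C})_{ABCD}\sin(\angle BP_{F}C)=0$, whence $\lambda:=(w_{C}/w_{A})_{ABCD}=-\sin(\angle BP_{F}A)/\sin(\angle BP_{F}C)$ is a constant, independent of $(w_{D})_{ABCD}$, with $\lambda>0$ because the weights are positive.

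Substituting $(w_{C})_{ABCD}=\lambda(w_{A})_{ABCD}$ into (\ref{invcond4}) gives $(w_{A})_{ABCD}=\bigl(c-(w_{B})_{ABCD}-(w_{D})_{ABCD}\bigr)/(1+\lambda)$; feeding this into (\ref{ratio3t}), which involves only $w_{A},w_{B},w_{D}$, expresses $(w_{B})_{ABCD}$ as an affine function $a_{B}(w_{D})_{ABCD}+b_{B}$, and back-substitution yields the relations (\ref{E-wabcd}) with
\[
 a_{A}=-\frac{1+a_{B}}{1+\lambda},\qquad a_{C}=\lambda\,a_{A}.
\]
Thus $a_{C}$ has the same sign as $a_{A}$, and $a_{A}<0$ (hence $a_{C}<0$) holds as soon as $a_{B}>-1$, so the whole statement reduces to proving $a_{B}>0$.

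Carrying out this elimination explicitly and writing $\alpha=\angle AP_{F}B$, $\beta=\angle BP_{F}C$ (so $\angle CP_{F}D=\pi-\beta$, $\angle DP_{F}A=\pi-\alpha$, with $\angle CP_{F}A$, $\angle CP_{F}B$, $\angle CP_{F}D$ obtained from $\alpha,\beta$ by oriented-angle addition), a product-to-sum computation collapses the coefficient to
\[
 a_{B}=\cot\tfrac{\alpha}{2}\,\cot\tfrac{\beta}{2},
\]
which is strictly positive since $\alpha,\beta\in(0,\pi)$ (the angles between consecutive arcs at an interior point of a convex quadrilateral are nondegenerate); this is consistent with the specialization of (\ref{wb}) in Theorem~\ref{plasticityprincipleM}, whose derivation used only (\ref{invwa}), (\ref{ratio2t}), (\ref{ratio3t}) and the $2\pi$ angle sum. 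Hence $a_{A},a_{C}<0<a_{B}$, so by (\ref{E-wabcd}) an increase of $(w_{D})_{ABCD}$ forces a decrease of the neighboring weights $(w_{A})_{ABCD},(w_{C})_{ABCD}$ and an increase of the opposite weight $(w_{B})_{ABCD}$; the remaining three weights are handled by the same argument after relabeling, which proves Proposition~\ref{spec1}.

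The step I expect to be the real obstacle is the sign bookkeeping leading to the last display. In Theorem~\ref{plasticityprincipleM} the analysis of (\ref{wb}) tacitly assumed $\sin(\angle CP_{F}A)>0$, i.e.\ that $P_{F}$ lies on the $B$-side of the intersection of the diagonals; when $P_{F}$ lies on the $D$-side this oriented angle exceeds $\pi$, so one must substitute $\angle BP_{F}C+\angle CP_{F}D=\pi$ and $\angle DP_{F}A+\angle AP_{F}B=\pi$ \emph{before} taking signs --- which is exactly why the coefficient collapses cleanly to $\cot\frac{\alpha}{2}\cot\frac{\beta}{2}$. One must also cover the borderline case, permitted by the hypothesis, in which $P_{F}$ lies on $(A,C)$ as well: then $\angle AP_{F}C=\pi$, so $\alpha+\beta=\pi$ and $a_{B}=\cot\frac{\alpha}{2}\tan\frac{\alpha}{2}=1>0$, and the conclusion survives.
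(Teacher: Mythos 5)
Your proof is correct, and it follows the same skeleton as the paper's: the hypothesis forces $\angle BP_{F}D=\pi$, so (\ref{ratio1t}) degenerates to the fixed proportion $w_{C}=\lambda w_{A}$ with $\lambda=-\sin(\angle BP_{F}A)/\sin(\angle BP_{F}C)>0$, and elimination against (\ref{ratio3t}) and the constraint produces affine dependences of $w_{A},w_{B},w_{C}$ on $w_{D}$ whose signs must be checked. The difference lies in how the signs are verified. The paper writes the $2\times2$ system (\ref{2.17bis})--(\ref{2.19bis}) for $(w_{B},w_{C})$ and argues term by term that the determinant and the numerators (\ref{xC}), (\ref{xB}) have the right signs; this bookkeeping needs the additional positional hypothesis, stated only parenthetically, that $P_{F}$ lies on the $B$-side of the intersection of the diagonals, and the case $P_{F}\in[A,C]$ is excluded and treated separately in Example~\ref{spec2}. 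You instead push the elimination all the way to a closed form, and your identity
\[
 a_{B}=\frac{\sin\alpha+\sin\beta+\sin(\alpha+\beta)}{\sin\alpha+\sin\beta-\sin(\alpha+\beta)}
 =\cot\tfrac{\alpha}{2}\,\cot\tfrac{\beta}{2},\qquad
 a_{A}=-\frac{1+a_{B}}{1+\lambda},\quad a_{C}=\lambda a_{A},
\]
checks out (the denominator equals $4\sin\frac{\alpha+\beta}{2}\sin\frac{\alpha}{2}\sin\frac{\beta}{2}>0$), is manifestly positive for all $\alpha,\beta\in(0,\pi)$, and therefore covers every position of $P_{F}$ on the open arc $(B,D)$, including the borderline case $P_{F}\in(A,C)$, in one stroke. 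That is a genuine gain in robustness. Two small remarks: your observation that the sub-triangle quantities of Corollary~\ref{corol} degenerate here (since $P_{F}$ lies on the side $BD$ of $\triangle ABD$ and $\triangle BCD$) correctly justifies re-deriving the affine relations from scratch rather than specializing Theorem~\ref{T-01}; and your aside that the result "is consistent with the specialization of (\ref{wb})" should be treated with caution, since the coefficient $2$ in (\ref{equation2wbwc})--(\ref{wb}) does not appear to survive an independent re-derivation --- but nothing in your argument depends on it.
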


\begin{proof}[Proof of Proposition~\ref{spec1}:]
Assuming that $P_{F}\in (B,D)\setminus([B,D]\cap[A,C])$, the angle
$\angle BP_FD$ is $\pi$ and from (\ref{ratio1t}), we have
\begin{equation}\label{waa}
 w_{A}= - \frac{\sin(\angle BP_{F}C)}{\sin(\angle BP_{F}A)} w_{C}.
\end{equation}
Replacing (\ref{waa}) to the inverse condition
\[
 w_{A}+w_{B}+w_{C}+w_{D}=c,
\]
we get
\begin{equation}\label{2.17bis}
 \big(1 -\frac{\sin(\angle BP_{F}C)}{\sin(\angle BP_{F}A)}\big) w_{C} +w_{B}=c-w_{D}.
\end{equation}
Replacing (\ref{waa}) in (\ref{ratio3t}), we have
\begin{equation} \label{2.19bis}
 -\frac{\sin(\angle CP_FA)\sin(\angle BP_{F}C)}{\sin(\angle BP_{F}A)} w_{C}+w_{B}\sin(\angle CP_FB)
 =-\sin(\angle CP_FD)w_{D}.
\end{equation}
Solving (\ref{2.17bis}) and (\ref{2.19bis}) with respect to
$w_{C}$ and $w_{A}$, we derive that
\[
 w_{C}=x_{C} w_{D}+ y_{C},\qquad
 w_{B}=x_{B} w_{D}+y_{B},
\]
where $x_{C},y_{C},x_{B},y_{B}$ are constant numbers
such that:
\begin{eqnarray}\label{xC}
 x_{C}\eq\frac{-\sin(\angle CP_{F}B)+\sin(\angle CP_{F}D)}{Det},\\
\label{xB}
 x_{B}\eq\frac{-\sin(\angle CP_{F}D)
 \big(1-\frac{\sin(\angle BP_{F}C)}{\sin(\angle BP_{F}A)}\big)
 +\sin(\angle CP_FA)\frac{\sin(\angle BP_{F}C)}{\sin(\angle BP_{F}A)}}{\rm Det},
\end{eqnarray}
 and Det is the determinant of (\ref{2.17bis}) and (\ref{2.19bis}):
\[
 {\rm Det}= \big(1 -\frac{\sin(\angle BP_{F}C)}{\sin(\angle BP_{F}A)}\big)\sin(\angle CP_FB)
 +\sin(\angle CP_FA) \frac{\sin(\angle BP_{F}C)} {\sin(\angle BP_{F}A)}.
\]
We will prove that $x_{C}<0$ and $x_{B}>0$.

The determinant Det of (\ref{2.17bis}) and (\ref{2.19bis}) is a
negative because
 $\sin(\angle CP_{F}B)<0,\
 \frac{\sin(\angle BP_{F}C)}{\sin(\angle BP_{F}A)}<1,\
 \sin(\angle BP_{F}A)<0,\
 \sin(\angle CP_FA)>0,\
 \sin(\angle BP_{F}C)>0$.
Taking into account that the numerator of (\ref{xC})\[-\sin(\angle
CP_{F}B)+\sin(\angle CP_{F}D)>0\] because
\[
 -\sin(\angle CP_{F}B)>0,\quad \sin(\angle CP_{F}D)>0,
\]
we obtain that $x_{C}<0.$
Taking into account that Det$<0$ and that the numerator of
(\ref{xB})
\[
 -\sin(\angle CP_{F}D)\big(1-\frac{\sin(\angle BP_{F}C)}{\sin(\angle BP_{F}A)}\big)
 +\sin(\angle CP_FA)\frac{\sin(\angle BP_{F}C)}{\sin(\angle BP_{F}A)}<0
\]
because
\[
 -\sin(\angle CP_{F}D)(1-\frac{\sin(\angle BP_{F}C)}{\sin(\angle BP_{F}A)})<0,
\]
and that
\[
 \sin(\angle CP_FA)\frac{\sin(\angle BP_{F}C)}{\sin(\angle BP_{F}A)}<0
\]
because $\sin(\angle BP_FA)<0$ (If $P^{\prime}$ is the point where
the diagonals meet we take $P_F\in[B,D]$ to be located closer to
$B$ with respect to $P^{\prime}$), we obtain that $x_{B}>0$.

The following equations
\[
 w_{C}=x_{C} w_{D}+ y_{C},\quad
 w_{B}=x_{B}w_{D}+y_{B},
 \]
 where $x_{C}<0$ and $x_{B}>0,$
give the plasticity principle of $ABCD$, because by increasing
$w_{D}$, the weight $w_{C}$ will be decreased and the weight
$w_{B}$ will be increased and considering the relation (\ref{waa})
$w_{A}$ will also be decreased because $-\frac{\sin(\angle
BP_{F}C)}{\sin(\angle BP_{F}A)}$ is a positive number.
\end{proof}

\begin{example}\label{spec2}\rm
If $P_{F}$ is the intersection of the two diagonals $(B,D)$ and
$(A,C)$, then the plasticity principle of $ABCD$ on $M$ holds.

To show this directly, assume that $P_{F}\in [B,D]\cap[A,C]$, and
get
\[
 \angle BP_{F}D=\angle CP_{F}A=\pi.
\]
From (\ref{ratio1t}), (\ref{ratio3t}) we derive $w_{A}=w_{C}$ and
$w_{B}=w_{D},$ respectively.

Replacing these two relations in  the 4-inverse condition of
Problem 2, we have
\[
 2 w_{A}+2 w_{D}=c,
\]
where $c$ is a positive real number.
 By the last relation, an increase in $w_{D}=w_{B}$ will cause a decrease in $w_{A}=w_{C},$ otherwise the inverse condition will not hold.
\end{example}

\begin{remark}\rm

 We would like to mention a
reformulation of the 4 inverse w.\,F-T problem for a convex
quadrilateral $ABCD$ on $\mathbb{R}^2,$ which is communicated to
the author by Professor Dr. Vladimir Rovenski.

Let $\Pi =\{ w_{A}+w_{B}+w_{C}+w_{D} =1\}$ (say, $c=1$) be a
3-plane in the linear space $\mathbb{R}^4$ of variables
$(w_{A},w_{B},w_{C},w_{D})$. Denote $v_{1}=PA/|PA|,\
v_{2}=PB/|PB|,\ v_{3}=PC/|PC|,\ v_{4}=PD/|PD|$ -- the unit vectors
in $\mathbb{R}^2$. The linear system $w_{A}v_{1}+w_{B}v_{2}+
w_{C}v_{3}+ w_{D}v_{4}=0$ (of two equations with respect to
$(x,y)$ given coordinates) determines a straight line
$w_{R}(t)=w_{R}(0) +t\vec{e}$ in $\Pi$, where
$e=(e_1,e_2,e_3,e_4)$ is a direction vector of the line.

Taking the inner product of $w_{A}v_{1}+w_{B}v_{2}+ w_{C}v_{3}+
w_{D}v_{4}=0$ with respect to the vector $v_{i}$ for $i=1,2,3,4,$
we obtain (\ref{equation1t})\,--\,(\ref{equation4tb}).

Taking the exterior product of
$w_{A}v_{1}+w_{B}v_{2}+w_{C}v_{3}+w_{D}v_{4}=0$ with respect to
the vector $v_{i}$ for $i=1,2,3,4,$ we obtain
(\ref{ratio1t})\,--\,(\ref{ratio3t}).
\end{remark}

\section{The plasticity of quadrilaterals on a manifold of positive curvature}
\label{sec:3}

We give some evolutionary structures for quadrilaterals on a
manifold with positive curvature. A metric of positive curvature
is intrinsic and the sum of the lower angles of every sufficiently
small convex triangle cannot be less than $\pi$ (see
\cite[p.~279]{Alexandrov:06}). The evolutionary structure for
quadrilaterals is derived by the plasticity equations and the
4-inverse Fermat-Torricelli problem for the following two cases:

\vskip1.5mm (1) A manifold $M^{\prime}$ which is obtained by
gluing two triangles that exist on a two-dimensional sphere with
constant Gaussian curvature $K_{1}$ and two triangles that exist
on a two-dimensional sphere of constant Gaussian curvature
$K_{2},$ for $K_{1}<K_{2},$ and the sum of angles meeting at the
Fermat-Torricelli point $P_{F}$ of the quadrilateral equals
$2\pi,$

\vskip1.5mm (2) A manifold $M^{\prime \prime}$ which is obtained
by gluing two triangles that exist on a complete convex surface of
bounded specific curvature $M\ (K_{1}<\kappa<K_{2})$ and two
triangles that exist on a two-dimensional sphere of constant
Gaussian curvature $K_{1},$ and $K_{2},$ respectively, and the sum
of angles meeting at the Fermat-Torricelli point $P_{F}$ of the
quadrilateral equals $2\pi.$

\vskip1.5mm These two cases form a manifold with metric of
positive curvature because they are specific cases of the gluing
theorem of A.D.~Alexandrov (see
\cite[pp.~278--279]{Alexandrov:06}) which states that:

\vskip1.5mm \textit{If a manifold is obtained by gluing a finite
number of polygons with metric of positive curvature so that, at
each vertex, the sum of angles of these polygons meeting at it is
no greater than $2\pi,$ then the metric on the whole manifold is
also a metric of positive curvature}.

A further generalization of the gluing theorem of A.D.~Alexandrov
was made by Yu. Reshetnyak concerning the gluing of
Cartan-Alexandrov-Toponogov spaces with curvature (in the sense of
A.D.~Alexandrov) bounded from above by a real number $K_{2}$
(CAT$(K_{2})$) spaces along proper convex subsets is a
CAT$(K_{2})$ space (see \cite[pp.~188--189 and footnote
15]{VToponogov:05} and \cite{BNik:07}).

We mention the definition of a $K_{0}$-plane and the comparison
triangle of a triangle on a two-dimensional surface.

If $K_{0}$ denotes the constant Gaussian curvature of a surface
$M$, then $M$ is called the $K_{0}$-plane.

\vskip1.5mm If $K_{0}<0,$ the $K_{0}$-plane is a Lobachevski
(hyperbolic) plane $H^{2}$.

\vskip1.5mm If $K_{0}=0,$ the $K_{0}$-plane is an Euclidean plane
$\mathbb{R}^{2}$.

\vskip1.5mm If $K_{0}>0,$ the $K_{0}$-plane is an open hemisphere
$S^{2}$ of radius $\frac{1}{\sqrt{K_{0}}}$.

\vskip1.5mm A \textit{comparison triangle} on a $K_{0}$-plane
denoted by $(\triangle \mathcal{ABC})_{K_{0}}$ of  $\triangle ABC$
on a two-dimensional surface is a triangle whose corresponding
sides have equal lengths: $l_{\mathcal{A}}(\mathcal{B})=l_{A}(B),$
$l_{\mathcal{B}}(\mathcal{C})=l_{B}(C),$
$l_{\mathcal{A}}(\mathcal{C})=l_{A}(C)$ (\cite[pp.~185,
188]{VToponogov:05}). The existence of the comparison triangle
$\triangle(\mathcal{ABC})_{K_{0}},$ for $K_{0}>0,$ is given by the
condition that the perimeter of the triangle is not greater than
$\frac{2\pi}{\sqrt{K_{0}}}.$

The following corollary is given on $M^{\prime}$ and
$M^{\prime\prime}$ which are particular cases of M considered in
Theorem~\ref{T-01}.

\begin{corollary}\label{prpr1}
Consider the 4-inverse w.\,F-T problem on $M^{q}$ for
$q\in\{\prime,\prime\prime.\}$ The following equations point out
the plasticity of quadrilaterals $A^{q}B^{q}C^{q}D^{q}$ and on
$M^{q}$:
\begin{equation} \label{plastic1M1}
(\frac{w_{B^{q}}}{w_{A^{q}}})_{A^{q}B^{q}C^{q}D^{q}}=(\frac{w_{B^{q}}}{w_{A^{q}}})_{A^{q}B^{q}C^{q}}[1-(\frac{w_{D^{q}}}{w_{A^{q}}})_{A^{q}B^{q}C^{q}D^{q}}
(\frac{w_{A^{q}}}{w_{D^{q}}})_{A^{q}C^{q}D^{q}}],
\end{equation}
\begin{equation} \label{plastic2M1}
(\frac{w_{C^{q}}}{w_{A^{q}}})_{A^{q}B^{q}C^{q}D^{q}}=(\frac{w_{C^{q}}}{w_{A^{q}}})_{A^{q}B^{q}C^{q}}[1-(\frac{w_{D^{q}}}{w_{A^{q}}})_{A^{q}B^{q}C^{q}D^{q}}
(\frac{w_{A^{q}}}{w_{D^{q}}})_{A^{q}B^{q}D^{q}}],
\end{equation}
and
\begin{equation}\label{invcond4M1}
(w_{A^{q}})_{A^{q}B^{q}C^{q}D^{q}}+(w_{B^{q}})_{A^{q}B^{q}C^{q}D^{q}}+(w_{C^{q}})_{A^{q}B^{q}C^{q}D^{q}}+(w_{D^{q}})_{A^{q}B^{q}C^{q}D^{q}}=const
\end{equation}
where the weight $(w_R)_{A^{q}B^{q}C^{q}D^{q}}$ corresponds to the
vertex $R$ that lie on the shortest arc $P_{F}R$,
$R\in\{A^{q},B^{q},C^{q},D^{q}\}$ and the weight $(w_S)_{SLN}$
corresponds to the vertex $S$ that lie on the shortest arc
$P_{F}S$ regarding the triangle $\triangle SLN$, for
$S,L,N\in\{A^{q},B^{q},C^{q},D^{q}\}$ and $S\ne L\ne N,$ for
$q\in\{\prime,\prime\prime\}.$
\end{corollary}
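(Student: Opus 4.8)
The strategy is to observe that $M^{q}$ ($q\in\{\prime,\prime\prime\}$) satisfies every hypothesis that was actually used in the proof of Theorem~\ref{T-01}, and then to repeat that proof line for line. First I would record that, by the gluing theorem of A.D.~Alexandrov quoted above, both $M^{\prime}$ and $M^{\prime\prime}$ carry a metric of positive curvature: each is obtained by gluing four triangles --- lying on a sphere $S_{K_{i}}$ ($i=1,2$) or on the complete convex surface $M$ of bounded specific curvature, all of which have metric of positive curvature --- along the common vertex $P_{F}$, where by construction the angles meeting sum to exactly $2\pi\le 2\pi$. In particular $M^{q}$ is a complete, locally compact length space with curvature $\ge 0$ in the sense of Alexandrov, so the differentiation rule for the length of a continuous family of shortest arcs, \cite[Corollary 4.5.7, Remark 4.5.8, Theorem 4.5.6]{Burago:01} used in~(\ref{compute partialnn}), is valid on $M^{q}$. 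Moreover $P_{F}$, being the common endpoint of four prescribed shortest arcs, is a smooth point of $M^{q}$ by \cite[Theorem 5, p.~135]{Alexandrov:06}, so its complete angle is $2\pi$ and $\angle A^{q}P_{F}B^{q}+\angle B^{q}P_{F}C^{q}+\angle C^{q}P_{F}D^{q}+\angle D^{q}P_{F}A^{q}=2\pi$, consistent with the normalisation built into the definitions of $M^{\prime}$ and $M^{\prime\prime}$.

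Next I would reproduce the stationarity computation of Theorem~\ref{T-01} on $M^{q}$: parametrise $l_{A^{q}}(t)=t$ along an arc-length parametrised curve through $P_{F}$, write $l_{B^{q}},l_{C^{q}},l_{D^{q}}$ as functions of $l_{A^{q}}$ as in (\ref{compute partialparametert})--(\ref{partialimp4tb}), and take the right derivative of the objective function at its minimiser $P_{F}$; cycling over the four directions yields the system $\sum_{Q}w_{Q}\cos(\angle QP_{F}R)=0$ for $R\in\{A^{q},B^{q},C^{q},D^{q}\}$, the analogue of (\ref{equation1t})--(\ref{equation4tb}). Using the trigonometric identity and the counterclockwise angle orientation about $P_{F}$ exactly as in the proof of Theorem~\ref{T-01}, this cosine system is converted into the sine system, the analogue of (\ref{ratio1t})--(\ref{ratio3t}).

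Finally I would read off the plasticity equations by degenerating weights. Setting $(w_{D^{q}})_{A^{q}B^{q}C^{q}D^{q}}=0$ in the sine system leaves precisely the stationarity equations of the 3-inverse w.\,F-T problem for $\triangle A^{q}B^{q}C^{q}$; setting $(w_{B^{q}})_{A^{q}B^{q}C^{q}D^{q}}=0$ (after reflecting $D^{q}$ to the symmetric point $D^{q\star}$ of $D^{q}$ about $P_{F}$) leaves that of $\triangle A^{q}C^{q}D^{q}$; and setting $(w_{C^{q}})_{A^{q}B^{q}C^{q}D^{q}}=0$ leaves that of $\triangle A^{q}B^{q}D^{q}$. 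Each of these sub-triangles lies inside a single gluing piece --- a $K_{0}$-plane with $K_{0}\in\{K_{1},K_{2}\}$, or the surface $M$ --- where the 3-inverse w.\,F-T problem has already been solved (\cite{Zach/Zou:08} on a surface of constant Gaussian curvature; \cite{Zachos/Cots:10}, \cite{Cots/Zach:11} on a $C^{2}$ surface, hence on $M$ and on its constituent pieces in $M^{\prime\prime}$). Substituting the resulting identifications of the sine ratios with the weight ratios $(\frac{w_{B^{q}}}{w_{A^{q}}})_{A^{q}B^{q}C^{q}}$, $(\frac{w_{A^{q}}}{w_{D^{q}}})_{A^{q}C^{q}D^{q}}$, $(\frac{w_{C^{q}}}{w_{A^{q}}})_{A^{q}B^{q}C^{q}}$ and $(\frac{w_{A^{q}}}{w_{D^{q}}})_{A^{q}B^{q}D^{q}}$ into the sine system and rearranging, as in the last part of the proof of Theorem~\ref{T-01}, gives (\ref{plastic1M1}) and (\ref{plastic2M1}); equation~(\ref{invcond4M1}) is simply the defining constraint of the 4-inverse w.\,F-T problem on $M^{q}$.

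The step I expect to be the main obstacle is the compatibility used in the last paragraph: one must ensure that the four triangles cut out of $A^{q}B^{q}C^{q}D^{q}$ by the shortest arcs $P_{F}A^{q},\dots,P_{F}D^{q}$ each sit within one gluing piece, so that the established 3-inverse w.\,F-T solution there applies; this is precisely why $M^{\prime}$ and $M^{\prime\prime}$ were defined as gluings of exactly four triangles meeting at $P_{F}$ with angle sum $2\pi$. A minor secondary point, relevant only to $M^{\prime\prime}$, is that two constituent triangles lie on a $C^{2}$ convex surface of merely bounded specific curvature rather than of constant curvature; there one invokes the 3-inverse w.\,F-T solution of \cite{Zachos/Cots:10}, \cite{Cots/Zach:11} and notes that the differentiation rule~(\ref{compute partialnn}) still applies because such a surface is non-negatively curved.
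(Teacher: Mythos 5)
Your proposal is correct and follows essentially the same route as the paper: invoke the gluing theorem to see that $M^{q}$ is a non-negatively curved complete locally compact space, apply the Burago--Burago--Ivanov differentiation rule to obtain the cosine and sine stationarity systems at $P_{F}$, and then repeat the weight-degeneration steps of Theorem~\ref{T-01} to read off (\ref{plastic1M1})--(\ref{invcond4M1}). The only material the paper includes that you omit is the explicit construction of the comparison triangles and the resulting angular relations $\angle R^{q}P_{F}S^{q}=\angle RP_{F}S\pm\epsilon_{i}$ via Alexandrov's angle comparison theorem, but that serves the ``comparative plasticity'' discussion rather than the derivation of the three displayed equations.
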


\begin{proof}[Proof of Corollary~\ref{prpr1} (Case $M^{\prime}$):]
Gluing the comparison triangles
 $(\triangle A^{\prime}P_{F}D^{\prime})_{K_{1}}$,
 $(\triangle D^{\prime}P_{F}C^{\prime})_{K_{1}}$,
 $(\triangle C^{\prime}P_{F}B^{\prime})_{K_{2}}$,
 $(\triangle B^{\prime}P_{F}A^{\prime})_{K_{2}}$ of
 $\triangle AP_{F}D$, $\triangle DP_{F}C$, $\triangle CP_{F}B$, and $\triangle BP_{F}A$,
respectively, we obtain the following angular relations on
$M^{\prime}:$
\begin{eqnarray*}
 &&\angle A^{\prime}P_{F}D^{\prime}=\angle AP_{F}D-\epsilon_{1},\quad
 \angle D^{\prime}P_{F}C^{\prime}=\angle DP_{F}C-\epsilon_{2},\\
 &&\angle C^{\prime}P_{F}B^{\prime}=\angle CP_{F}B+\epsilon_{3},\quad
 \angle B^{\prime}P_{F}A^{\prime}=\angle BP_{F}A+\epsilon_{4},
\end{eqnarray*}
(where $\epsilon_{i},$  are non-negative real numbers for
$i=1,2,3,4$) such that
\[
 \angle A^{\prime}P_{F}D^{\prime}+\angle D^{\prime}P_{F}C^{\prime}+\angle C^{\prime}P_{F}B^{\prime}+\angle B^{\prime}P_{F}A^{\prime}=2\pi.
\]
The angular relations hold due to a result of A.D.~Alexandrov also
known as the angle comparison theorem which states that:

\vskip1.5mm Let $T$ be a triangle on a complete convex surface of
bounded specific curvature and let triangle $T_{1},$ $T_{2}$ be
triangles on the two-dimensional spheres $S_{K_{1}}$ and
$S_{K_{2}}$ with sides of the same length as $T$. If $\alpha,$
$\alpha_{1},$ $\alpha_{2}$ are corresponding angles of these
triangles then $\alpha_{1}\le \alpha\le \alpha_{2}$ (see
\cite[Theorem 4, p.~377, p.~54, case 1]{Alexandrov:06}).

We apply the following general result which was given in
\cite[Corollary 4.5.7, p.~125, Remark 4.5.8, p.~126 and Theorem
4.5.6, p.~123]{Burago:01} that deals with a rule for
differentiating the length of a continuous family of shortest arcs
connecting $Q$ to points $c(t),$ for $Q\in\{A,B,C\}$ and the
existence of the right derivative $\frac{dl_{Q}}{dt},$  on a
non-negatively curved complete locally compact space $X$
($M^{\prime}$ a subset of $X$):

\vskip1.5mm Let $c(t):[0,T]\to M$ be a geodesic parameterized by
arclength, $Q$ a fixed point in $M$, $Q\ne c(0)$. Then the
function $t\to l_{Q}(t)=|Qc(t)|$ has the right derivative and
\[
 \lim_{t\to +0}\frac{l_{Q}(t)-l_{Q}(0)}{t}=\cos(\pi-\varphi_{c,\min}^{\prime}(t)),
\]
where $\varphi_{c,\min}^{\prime}(t)$ is the infimum (minimum) of
angles between $c(t)$ and the shortest arcs connecting $c(0)$ to
$Q$. Taking into considerations the parameterization $l_{Q}(t)=t,$
for $Q\in\{A^{\prime},B^{\prime},C^{\prime},D^{\prime}\}$ and
differentiating the objective function (\ref{minimumcs}) with
respect to $t$ and following the same process in the proof of
Theorem~\ref{T-01}, we get
\[
 \sum_{Q\in\{A^{\prime},B^{\prime},C^{\prime},D^{\prime}\}}w_{Q}\cos(\angle
 QP_{F}R)=0\quad
 \mbox{for}\quad
 Q, R\subset\{A^{\prime},B^{\prime},C^{\prime},D^{\prime}\}\ \mbox{and}\  W_{A}\subset M.
\]
Applying the same steps that have been used in the proof of
Theorem~\ref{T-01}, we obtain (\ref{plastic1M1}) and
(\ref{plastic2M1}). This system of equations induces a
\textit{comparative plasticity} between the plasticity of $ABCD$
on $M$ and $A^{\prime}B^{\prime}C^{\prime}D^{\prime}$ on
$M^{\prime}$ because we may compare the plasticity equations on a
convex surface M with the plasticity equations on a convex surface
$M^{\prime}$ such that the given directions with respect to
$P_{F}$ vary in a specific way that depend on $\angle
R^{\prime}P_{F}S^{\prime}$ for
$R^{\prime},S^{\prime}\in\{A^{\prime},B^{\prime},C^{\prime},D^{\prime}\}:$
\begin{eqnarray*}
 &&\angle A^{\prime}P_{F}D^{\prime}=\angle AP_{F}D-\epsilon_{1},\quad
 \angle D^{\prime}P_{F}C^{\prime}=\angle DP_{F}C-\epsilon_{2},\\
 &&\angle C^{\prime}P_{F}B^{\prime}=\angle CP_{F}B+\epsilon_{3},\quad
 \angle B^{\prime}P_{F}A^{\prime}=\angle BP_{F}A+\epsilon_{4},
\end{eqnarray*}
and every ratio
$\frac{(\frac{w_{R^{\prime}}}{w_{S^{\prime}}})_{A^{\prime}B^{\prime}C^{\prime}D^{\prime}}}{(\frac{w_{R}}{w_{S}})_{ABCD}}$
 depend on the values of $\epsilon_{i}$ for $i=1,2,3,4$ and $R,S\in\{A,B,C,D\}.$

For instance by taking into account that
\[a_{A}+a_{B}+a_{C}=-1,\] and let $a_{A}$, $a_{C}$ be increased by $\delta a_{A},\delta
a_{C}>0$ then $a_{B}$ will be decreased by $\delta a_{B}<0$ such
that
\[a_{A}+\delta a_{A}+a_{B}+\delta a_{B}+a_{C}+\delta a_{C}=-1,\]
or
\[\delta a_{A}+\delta a_{B}+\delta a_{C}=0,\]
where $a_{R^{\prime}}=a_{R}+\delta a_{R},$  where $a_{R}$ is taken
by Corollary~\ref{corol}, for $R\in\{A,B,C\}.$
\end{proof}

\begin{proof}[Proof of Corollary~\ref{prpr1} (Case $M^{\prime\prime}$):]
Taking into consideration the gluing of the triangles $\triangle
AP_{F}D,$ $\triangle DP_{F}C$ which exist on $M$ with the two
comparison triangles $(\triangle
C^{\prime}P_{F}B^{\prime})_{K_{1}},$ $(\triangle
B^{\prime}P_{F}A^{\prime})_{K_{2}}$ of $\triangle CP_{F}B,$
$\triangle BP_{F}A$, respectively, we obtain the following angular
relations on $M^{\prime\prime}:$
\begin{eqnarray*}
 &&\angle A^{\prime\prime}P_{F}D^{\prime\prime}=\angle AP_{F}D,\quad
 \angle D^{\prime\prime}P_{F}C^{\prime\prime}=\angle DP_{F}C,\\
 &&\angle C^{\prime\prime}P_{F}B^{\prime\prime}=\angle CP_{F}B-\epsilon_{3},\quad
 \angle B^{\prime\prime}P_{F}A^{\prime\prime}=\angle BP_{F}A+\epsilon_{4},
\end{eqnarray*}
(where $\epsilon_{i}$  are non-negative real numbers for $i=3,4$)
such that
\[
 \angle A^{\prime\prime}P_{F}D^{\prime\prime}+\angle D^{\prime\prime}P_{F}C^{\prime\prime}+\angle C^{\prime\prime}P_{F}B^{\prime\prime}+\angle B^{\prime\prime}P_{F}A^{\prime\prime}=2\pi.
\]
The angular relations hold due to a result of A.D.~Alexandrov also
known as the angle comparison theorem (see \cite[Theorem 4,
p.~377, p.~54, case 1]{Alexandrov:06}).

Applying the same process that have been used in the proof of
Theorem~\ref{T-01}, we derive the desired plasticity equations
(\ref{plastic1M1}) and (\ref{plastic2M1}).
\end{proof}

\begin{remark}\rm
The comparative plasticity between the surfaces M and $M^{\prime}$
or $M^{\prime\prime}$ given in Corollary~\ref{prpr1} approaches
some type of formulation of the plasticity of a mathematical
"gibbosity" (part of a sphere on a convex surface) and it might be
considered as a useful tool for future medical applications in the
area of robotics.
\end{remark}
\section{A symmetrization of weighted quadrilaterals
on a surface of Gaussian curvature bounded above by a positive
number} \label{sec:4}

We introduce a new symmetrization technique which transforms some
classes of weighted convex quadrilaterals on a $C^{2}$ complete
convex surface of bounded Gaussian curvature $M$  to
parallelograms which lie on the same tangent plane that is defined
at the weighted Fermat-Torricelli point of the corresponding
quadrilateral.

We give two classes of parallelograms which characterize the
evolution of convex quadrilaterals which means that the weights
satisfy the plasticity equations of Theorem~\ref{T-01} \vskip1.5mm
The variable \textit{Gaussian curvature} ${K}$ is positive and
bounded below by  a real positive number $K_{1}:$ $K_{1}<{K}$,
then the geodesic distance of length not greater than
$\frac{\pi}{\sqrt{K_{1}}}$ and is unique, and an estimate of the
\textit{injectivity radius} $r_{i}=inf(r_{i}(P):P\in M)$ of $M$ is
given by the inequality $r_{i}\le\frac{\pi}{\sqrt{K_{1}}}\,$, see
\cite{VToponogov:09}, \cite[Theorem 3.5.2]{VToponogov:05}.

We consider a convex quadrilateral
$A^{\circ}B^{\circ}C^{\circ}D^{\circ}$ that belongs on a
neighborhood $W_{A}\subset M$, where $W_{A}$ is a subset of a
geodesic circle with center $A$ and radius $r_{i}$, and the
perimeter of the quadrilateral is smaller than $2\,r_{i}$.

Suppose that we select $w_R$ for
$R\in\{A^{\circ}B^{\circ}C^{\circ}D^{\circ}\}$
(Fig.~\ref{fig:tas1}) such that $P_{F}$ is located at the interior
domain of $A^{\circ}B^{\circ}C^{\circ}D^{\circ}$ and some
inequalities are also satisfied
\[w_{B^{\circ}}>w_{A^{\circ}}>w_{D^{\circ}}>w_{C^{\circ}}.\]

We consider a quadrilateral $A^{\circ}B^{\circ}C^{\circ}D^{\circ}$
that belongs on a neighborhood $W_{A}\subset M$, where $W_{A}$ is
a subset of a geodesic circle with center $A$ and radius $r_{i}$,
and the perimeter of the quadrilateral is smaller than $2\,r_{i}$.

Suppose that we select $w_R$ for
$R\in\{A^{\circ}B^{\circ}C^{\circ}D^{\circ}\}$
(Fig.~\ref{fig:tas1}) such that $P_{F}$ is located at the interior
domain of $A^{\circ}B^{\circ}C^{\circ}D^{\circ}.$

\begin{proposition}\label{theorsym}
A symmetrization of $A^{\circ}B^{\circ}C^{\circ}D^{\circ}$ with
respect to $P_{F}$ is the parallelogram
$A'^{\star}B'C'^{\star}D'$, where $A'^{\star},\,C'^{\star}$ are
the symmetric points of $A^{\prime},\,C^{\prime}$ with respect to
$P_{F}$ (see Figs.~\ref{fig:tas1} and \ref{fig:tas}),
\[
 |P_{F}R|=|\exp_{P_{F}}^{-1}(R)|=|P_{F}R^{\prime}|=w_{R}
\]
where $R$ lies on the geodesic $P_{F}R^{\circ}$ for
$R\in\{A,B,C,D\}$ and
$R^{\prime}\in\{A^{\prime},B^{\prime},C^{\prime},D^{\prime}\}.$

\end{proposition}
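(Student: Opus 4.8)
The plan is to transport the whole picture into the tangent plane $\Pi:=T_{P_{F}}M$ by means of $\exp_{P_{F}}$ and to read off the asserted parallelogram property from the first--order condition characterizing the weighted Fermat--Torricelli point. First I would fix, for each $R\in\{A,B,C,D\}$, the unit vector $u_{R}\in\Pi$ tangent at $P_{F}$ to the (unique) minimizing geodesic $P_{F}R^{\circ}$; both the uniqueness of this geodesic and the fact that $\exp_{P_{F}}$ is a diffeomorphism onto its image follow from the injectivity radius estimate $r_{i}\le\pi/\sqrt{K_{1}}$ recalled in this section, provided the chosen radii $w_{R}$ are smaller than $r_{i}$, as we may assume after normalizing the weights. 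By the Gauss lemma $\exp_{P_{F}}$ is a radial isometry, so the point $R$ placed on $P_{F}R^{\circ}$ at arclength $w_{R}$ satisfies $\exp_{P_{F}}^{-1}(R)=w_{R}u_{R}=:R'$, whence $\abs{P_{F}R}=\abs{\exp_{P_{F}}^{-1}(R)}=\abs{P_{F}R'}=w_{R}$ (the displayed identity), and the Euclidean angle $\angle R'P_{F}S'$ in $\Pi$ equals the geodesic angle $\angle R^{\circ}P_{F}S^{\circ}$.

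Next I would invoke the first--order condition for $P_{F}$. Exactly as in the proof of Theorem~\ref{T-01} --- equivalently, by the first variation of arc length, which yields $\nabla_{P}\,d(P,R^{\circ})\big|_{P=P_{F}}=-u_{R}$ --- the relations $\sum_{Q\in\{A,B,C,D\}}w_{Q}\cos(\angle QP_{F}R)=0$, valid for every $R$, say precisely that $\big\langle\,\sum_{Q}w_{Q}u_{Q},\,u_{R}\,\big\rangle=0$ for every $R$. Since $A^{\circ}B^{\circ}C^{\circ}D^{\circ}$ is a genuine convex quadrilateral with $P_{F}$ in its interior, the vectors $u_{A},u_{B},u_{C},u_{D}$ are not all collinear and hence span $\Pi$, so
\[
 w_{A}u_{A}+w_{B}u_{B}+w_{C}u_{C}+w_{D}u_{D}=0\qquad\text{in }\Pi ,
\]
that is, $A'+B'+C'+D'=0$.

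From here the conclusion is immediate. By definition $A'^{\star}=-A'$ and $C'^{\star}=-C'$ (the antipodes of $A',C'$ through the origin $P_{F}$ of $\Pi$), so
\[
 B'-A'^{\star}=B'+A'=-(C'+D')=C'^{\star}-D' ,
\]
i.e. the side $A'^{\star}B'$ is equal and parallel to the side $D'C'^{\star}$; equivalently the diagonals $A'^{\star}C'^{\star}$ and $B'D'$ share the midpoint $\tfrac12(B'+D')=-\tfrac12(A'+C')$. Hence $A'^{\star}B'C'^{\star}D'$ is a parallelogram in $\Pi$, and being a centrally symmetric quadrilateral obtained from $A'B'C'D'$ by replacing $A',C'$ with their $P_{F}$-antipodes it is, by construction, the sought symmetrization of $A^{\circ}B^{\circ}C^{\circ}D^{\circ}$ with respect to $P_{F}$. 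I expect the only real work beyond this to be bookkeeping around degeneracy and ordering: one must check that all four constructed points lie in the injectivity ball so that $\exp_{P_{F}}^{-1}$ is legitimately defined on them, and that the parallelogram is non-degenerate with $A'^{\star},B',C'^{\star},D'$ as its correct convex cyclic order --- both of which follow from the positivity of the $w_{R}$, the convexity of $A^{\circ}B^{\circ}C^{\circ}D^{\circ}$ with $P_{F}$ interior, and the angle sum $\angle A^{\circ}P_{F}B^{\circ}+\angle B^{\circ}P_{F}C^{\circ}+\angle C^{\circ}P_{F}D^{\circ}+\angle D^{\circ}P_{F}A^{\circ}=2\pi$, but deserve to be spelled out.
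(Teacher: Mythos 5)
Your proposal is correct, and it reaches the conclusion by a cleaner route than the paper, though both arguments ultimately rest on the same first-order condition at $P_{F}$. The paper keeps everything scalar: it derives the cosine relations $\sum_{Q}w_{Q}\cos(\angle QP_{F}R)=0$ and the companion sine relations, squares and adds the $R=A^{\circ}$ cosine equation and the rearranged sine equation to obtain the law-of-cosines identities (\ref{eq:c1})--(\ref{eq:c2}), and then converts these, via the supplementary angles $\phi=\pi-\angle$ created by reflecting $A^{\prime},C^{\prime}$ through $P_{F}$, into the equalities $l_{A'^{\star}}(B')=l_{C'^{\star}}(D')$ and $l_{A'^{\star}}(D')=l_{B'}(C'^{\star})$, concluding that both pairs of opposite sides have equal length. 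You instead pass immediately to the vector form of the stationarity condition, $w_{A}u_{A}+w_{B}u_{B}+w_{C}u_{C}+w_{D}u_{D}=0$ (which the paper states as a ``balancing condition'' but does not exploit directly), and read off $B'-A'^{\star}=C'^{\star}-D'$ by pure affine algebra; this gives equality \emph{and} parallelism of one pair of opposite sides in one line and makes the role of the antipodal reflections transparent. The one inference you should justify explicitly is the passage from ``$\sum_{Q}w_{Q}u_{Q}$ is orthogonal to each $u_{R}$'' to ``$\sum_{Q}w_{Q}u_{Q}=0$'': this is exactly where convexity of the quadrilateral and interiority of $P_{F}$ enter (the $u_{R}$ span the two-dimensional tangent plane), and you do flag it correctly. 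Your closing remarks about the injectivity radius and the cyclic order of $A'^{\star},B',C'^{\star},D'$ address points the paper leaves implicit; they are worth keeping.
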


\begin{proof}[Proof of Proposition~\ref{theorsym}:]
The geodesic arcs $l_{R}$ for
$R\in\{A^{\circ},B^{\circ},C^{\circ},D^{\circ}\}$ belong to
$W_{A}$ and they are shortest arcs and any shortest arc is a
geodesic (see \cite[Lemma 3.5.2, p.~165 and Theorem 3.5.2,
p.~167]{VToponogov:05}). A particular case of the differentiation
of the length of a continuous family of shortest arcs given in
\cite[Corollary 4.5.7, p.~125]{Burago:01} is the differentiation
of the length of a $C^{2}$ family of geodesic arcs given in
\cite[Lemma~3.5.1, p.~164 and Remark 3.5.1]{VToponogov:05}.

 Let the geodesic arcs $l_{B^{\circ}}$,
$l_{C^{\circ}}$, $l_{D^{\circ}}$ be expressed as functions of
$l_{A^{\circ}}.$
\begin{equation} \label{eq:a_22gp}
 l_{B^{\circ}}=l_{B^{\circ}}(l_{A^{\circ}}),\quad
 l_{C^{\circ}}=l_{C^{\circ}}(l_{A^{\circ}}),\quad
 l_{D^{\circ}}=l_{D^{\circ}}(l_{A^{\circ}}).
\end{equation}
From (\ref{eq:a_22gp}) and (\ref{minimumcs}) the following
equation is obtained replacing $R\to R^{\circ},$ for
$R^{\circ}\in\{A^{\circ},B^{\circ},C^{\circ},D^{\circ}\}$
\begin{equation}\label{eq:B_2f_11gp}
 w_{A^{\circ}}l_{A^{\circ}}+w_{B^{\circ}}l_{B^{\circ}}(l_{A^{\circ}})+w_{C^{\circ}}l_{C^{\circ}}(l_{A^{\circ}})= \min
\end{equation}
Let $c(t):[0,T]\to M$ be a geodesic parameterized by arclength,
$Q$ a fixed point on $M$, $Q\ne c(0)$. Then the function $t \to
l_{Q}(t)=|Qc(t)|$ has the right derivative and
\begin{equation}\label{compute partialnngp}
\lim_{t\to
+0}\frac{l_{Q}(t)-l_{Q}(0)}{t}=\cos(\pi-\varphi_{c}^{\prime}(t)),
\end{equation}
where $\varphi_{c}^{\prime}(t)$ is the  angle between $c(t)$ and
the geodesic arc connecting $c(0)$ to $Q$.
\begin{figure}
\centering
\includegraphics[scale=0.30]{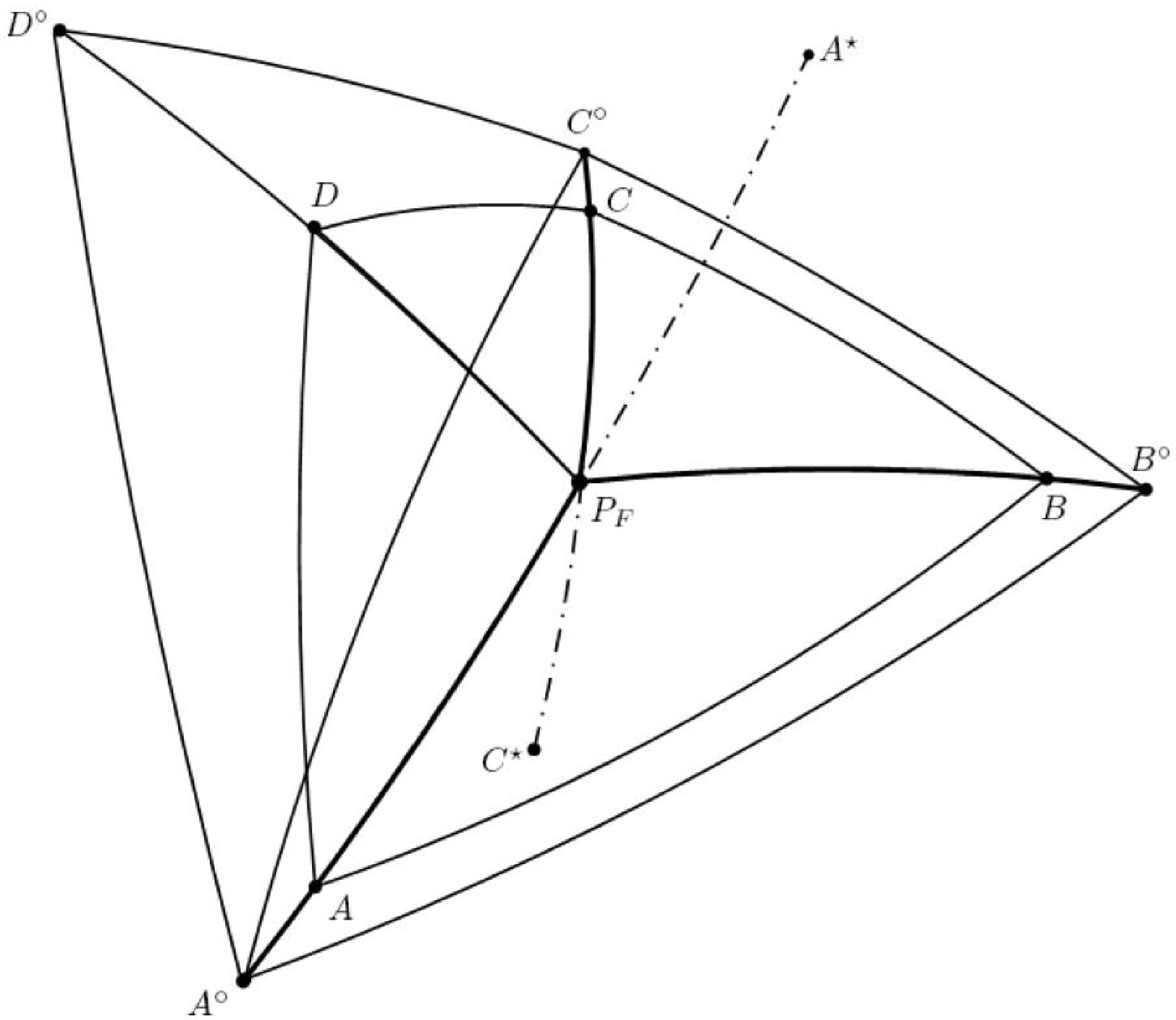}
\caption{} \label{fig:tas1}
\end{figure}

We choose the parametrization
\begin{equation}\label{compute partialparametertgp}
l_{A^{\circ}}(t)=t,
\end{equation}
Taking into account that $P_{F}$ is the w.\,F-T point of
$A^{\circ}B^{\circ}C^{\circ}D^{\circ}$, we derive the (right)
derivative of (\ref{eq:B_2f_11gp}) with respect to the variable
$l_{A^{\circ}}$ and considering (\ref{compute
partialparametertgp}), we get
\begin{eqnarray} \label{eq:B221fractgp}
 \nonumber
  w_{A^{\circ}}
 +w_{B^{\circ}}\lim_{t\to +0}\frac{l_{B^{\circ}}(t)-l_{B^{\circ}}(0)}{l_{A^{\circ}}(t)}
 +w_{C^{\circ}}\lim_{t\to+0}\frac{l_{C^{\circ}}(t)-l_{C^{\circ}}(0)}{l_{A^{\circ}}(t)}\\
 +w_{D^{\circ}}\lim_{t\to+0}\frac{l_{D^{\circ}}(t)-l_{D^{\circ}}(0)}{l_{A^{\circ}}(t)}=0.
\end{eqnarray}
From (\ref{compute partialnngp}) and (\ref{compute
partialparametertgp}), we get
\begin{equation}\label{partialimp2tgp}
 \lim_{t\to +0}\frac{l_{B^{\circ}}(t)-l_{B^{\circ}}(0)}{l_{A^{\circ}}(t)}
 =\cos(\angle A^{\circ}P_{F}B^{\circ}).
\end{equation}
Let the point $C^{\circ}$, the length parameterized curve $c(t)$
and the point $P_{F}$ that belongs in $c(t)$ be given in the
neighborhood $W_{P_{F}}$. Taking into account (\ref{compute
partialnngp}) and (\ref{compute partialparametertgp}), we have
(\cite[Lemma 3.5.1, p.~164 and Remark 3.5.1,
p.~165]{VToponogov:05})
\begin{equation}\label{partialimp3tgp}
 \lim_{t\to+0}\frac{l_{C^{\circ}}(t)-l_{C^{\circ}}(0)}{l_{A^{\circ}}}
 =\cos(\angle A^{\circ}P_{F}C^{\circ}).
\end{equation}
Similarly, let the point $D^{\circ}$, the length parameterized
curve $c(t)$ and the point $P_{F}$ that belongs in $c(t)$ be given
in the neighborhood $W_{A}$. Taking into account (\ref{compute
partialnngp}) and (\ref{compute partialparametertgp}), we obtain
\begin{equation}\label{partialimp4tbgp}
\lim_{t\to
+0}\frac{l_{D^{\circ}}(t)-l_{D^{\circ}}(0)}{l_{A^{\circ}}}=\cos(\angle
A^{\circ}P_{F}D^{\circ}).
\end{equation}
Replacing (\ref{partialimp2tgp}) --
(\ref{partialimp4tbgp}) in (\ref{eq:B221fractgp}), we obtain
\begin{equation}\label{eq:quadr115}
 w_{A^{\circ}}+w_{B^{\circ}}\cos(\angle A^{\circ}P_{F}B^{\circ})
 =-w_{C^{\circ}}\cos(\angle A^{\circ}P_{F}C^{\circ})-w_{D^{\circ}}\cos(\angle A^{\circ}P_{F}D^{\circ}).
\end{equation}
Similarly, working cyclically and differentiating
(\ref{minimumcs}) with respect to $l_{B^{\circ}}$, and choosing
the parametrization $l_{B^{\circ}}(t^{\prime})=t^{\prime}$,
differentiating (\ref{minimumcs}) with respect to $l_{C^{\circ}}$
and choosing the parametrization
$l_{C^{\circ}}(t^{\prime\prime})=t^{\prime\prime}$ and
differentiating (\ref{minimumcs}) with respect to $l_{D^{\circ}}$
and choosing the parametrization
$l_{D^{\circ}}(t^{\prime\prime\prime})=t^{\prime\prime\prime}$, we
derive three relations, respectively,
\begin{eqnarray}\label{equation2tgp}
 w_{A^{\circ}}\cos(\angle
 B^{\circ}P_{F}A^{\circ})+w_{B^{\circ}}=-w_{C^{\circ}}\cos(\angle
 B^{\circ}P_{F}C^{\circ})-w_{D^{\circ}}\cos(\angle
 B^{\circ}P_{F}D^{\circ}),\\
\label{equation3tgp}
 w_{A^{\circ}}\cos(\angle
 C^{\circ}P_{F}A^{\circ})+w_{B^{\circ}}\cos(\angle
 C^{\circ}P_{F}B^{\circ})=-w_{C^{\circ}}-w_{D^{\circ}}\cos(\angle
 C^{\circ}P_{F}D^{\circ}),\\
\label{equation4tbgp}
 w_{A^{\circ}}\cos(\angle
 D^{\circ}P_{F}A^{\circ})+w_{B^{\circ}}\cos(\angle
 D^{\circ}P_{F}B^{\circ})=-w_{C^{\circ}}\cos(\angle
 D^{\circ}P_{F}C^{\circ})-w_{D^{\circ}}.
\end{eqnarray}
The equations (\ref{eq:quadr115})\,--\,(\ref{equation4tbgp}) could
be written in a unified form
\[
 \sum_{Q\in\{A^{\circ},\,B^{\circ},\,C^{\circ},\,D^{\circ}\}}w_{Q}\cos(\angle QP_{F}R)=0\quad
 \mbox{for}\quad R\in\{A^{\circ},B^{\circ},C^{\circ},D^{\circ}\}.
\]
Consider the 4-inverse w.\,F-T problem under the condition
\[
 \sum_{R\in\{A^{\circ},\,B^{\circ},\,C^{\circ},\,D^{\circ}\}}(w_{R})_{A^{\circ}B^{\circ}C^{\circ}D^{\circ}}=
 c.
\]
Solving the linear system
(\ref{eq:quadr115})\,--\,(\ref{equation2tgp}) with respect to
$w_{A^{\circ}}$ and $w_{B^{\circ}}$, we obtain
\begin{eqnarray}\label{ratio1tgp}
 w_{A^{\circ}}\sin(\angle B^{\circ}P_{F}A^{\circ})+w_{C^{\circ}}\sin(\angle
 B^{\circ}P_{F}C^{\circ})+w_{D^{\circ}}\sin(\angle B^{\circ}P_{F}D^{\circ})=0,\\
 \label{ratio2tgp}
 w_{B^{\circ}}\sin(\angle A^{\circ}P_{F}B^{\circ})+w_{C^{\circ}}\sin(\angle
 A^{\circ}P_{F}C^{\circ})+w_{D^{\circ}}\sin(\angle A^{\circ}P_{F}D^{\circ})=0.
\end{eqnarray}
 Similarly, solving the system (\ref{eq:quadr115}) and (\ref{equation3tgp}) with respect to
 $w_{A^{\circ}},$ $w_{C^{\circ}}$, we obtain
\begin{equation}\label{ratio3tgp}
 w_{A^{\circ}}\sin(\angle C^{\circ}P_{F}A^{\circ})+w_{B^{\circ}}\sin(\angle
 C^{\circ}P_{F}B^{\circ})+w_{D^{\circ}}\sin(\angle C^{\circ}P_{F}D^{\circ})=0.
\end{equation}
We write (\ref{ratio2tgp}) in the following form:
\begin{equation} \label{eq:quadr15Kquad}
w_{B^{\circ}}\sin(\angle
A^{\circ}P_{F}B^{\circ})=-w_{C^{\circ}}\sin(\angle
A^{\circ}P_{F}C^{\circ})-w_{D^{\circ}}\sin(\angle
A^{\circ}P_{F}D^{\circ}).
\end{equation}
From the derived equations (\ref{eq:quadr115}) --
(\ref{ratio3tgp}) we obtain a balancing condition of four tangent
vectors at $P_{F}$ which are located at the tangent plane
$T_{P_{F}}(M)$ at $P_{F}$ having their weighted sum zero. By this
approach, we deduce the invariance property of the w.\,F-T point
$P_{F}$ for a given convex quadrilateral
$A^{\circ}B^{\circ}C^{\circ}D^{\circ}$ which states that:

\vskip1mm Suppose that there is a convex quadrilateral
$A^{\circ}B^{\circ}C^{\circ}D^{\circ}$
 on $M$ and a non-negative weight $w_{R}$ corresponds at each vertex $R,$ for $R\in\{A^{\circ}B^{\circ}C^{\circ}D^{\circ}\}$. Assume that the
 w.\,F-T point $P_{F}$ point is an interior point of
$A^{\circ}B^{\circ}C^{\circ}D^{\circ}.$ If $P_{F}$ is connected
with every vertex $R$ and a point $S$ is selected with a
non-negative weight $w_{R}$ such that $S$ belongs to the geodesic
that is defined by the geodesic arc $P_{F}R,$ for
$R\in\{A^{\circ},B^{\circ},C^{\circ},D^{\circ}\}$ and
$S\in\{A,B,C,D\}$ and the convex quadrilateral $ABCD$ is
constructed such that the corresponding weighted Fermat-Torricelli
point $P_{{F}}'$ is not a vertex of $ABCD,$ then the w.\,F-T point
$P_{F}'$ is identical with $P_{F}$.
 Squaring both parts of (\ref{eq:quadr115}) and
(\ref{eq:quadr15Kquad}) and adding the two derived equations, we
get
\begin{equation}\label{eq:c1}
w_{A^{\circ}}^2+w_{B^{\circ}}^2+2w_{A^{\circ}}w_{B^{\circ}}\cos(\angle
A^{\circ}P_{F}B^{\circ})=w_{C^{\circ}}^2+w_{D^{\circ}}^2+2w_{C^{\circ}}w_{D^{\circ}}\cos({\angle
C^{\circ}P_{F}D^{\circ}}).
\end{equation}
\begin{figure}
\centering
\includegraphics[scale=0.35]{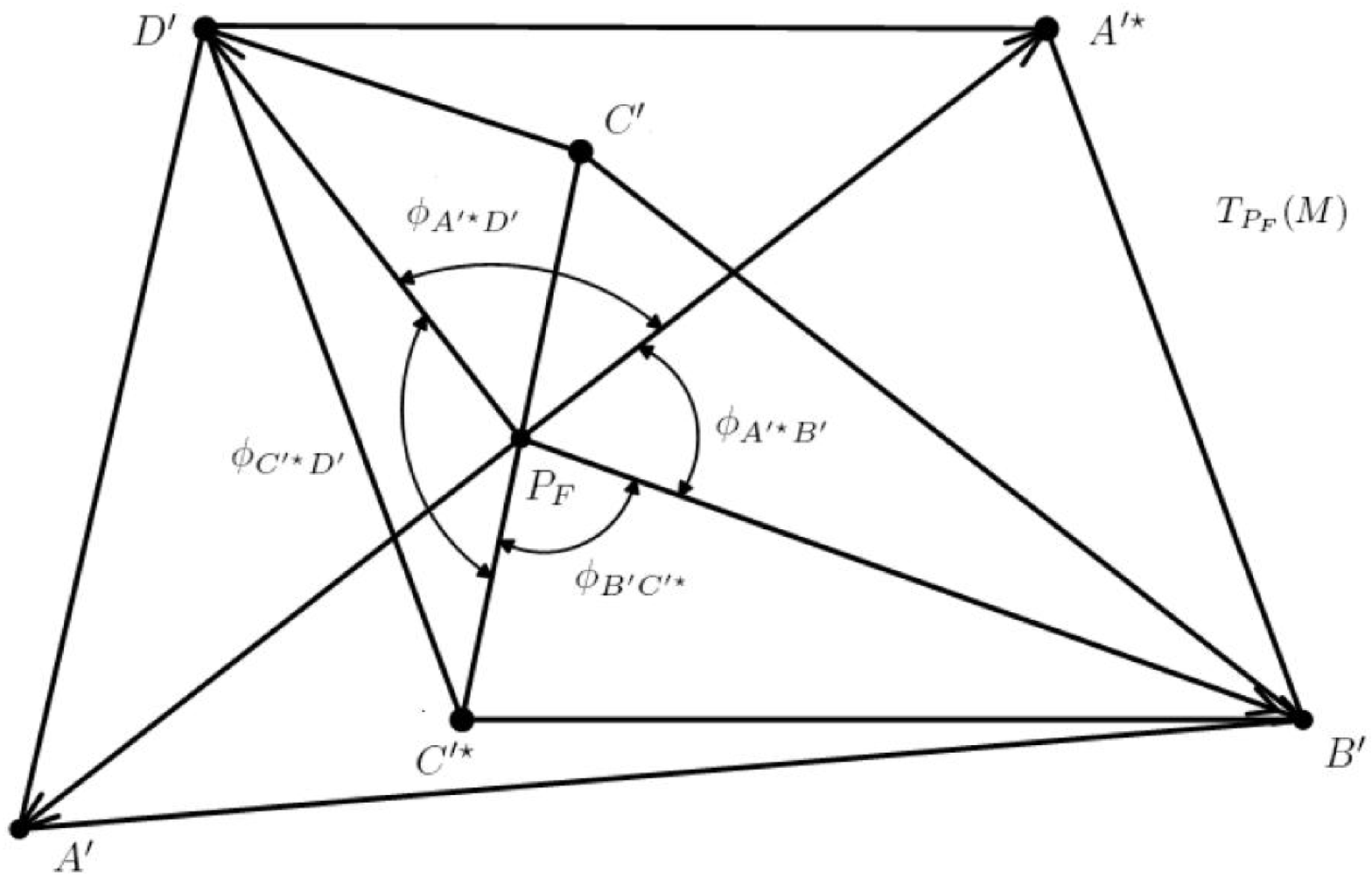}
\caption{} \label{fig:tas}
\end{figure}
Applying the same process and exchanging the indices
$D^{\circ}\leftrightarrow B^{\circ}$ in (\ref{eq:c1}), we get
\begin{equation}\label{eq:c2}
w_{A^{\circ}}^2+w_{D^{\circ}}^2+2w_{A^{\circ}}w_{D^{\circ}}\cos(\angle
A^{\circ}P_{F}D^{\circ})=w_{B^{\circ}}^2+w_{C^{\circ}}^2+2w_{B^{\circ}}w_{C^{\circ}}\cos(\angle
B^{\circ}P_{F}C^{\circ}).
\end{equation}
The invariance property of the w.\,F-T $P_{F}$ gives us the
ability to transform the initial quadrilateral
$A^{\circ}B^{\circ}C^{\circ}D^{\circ}$ to $ABCD$ and applying the
inverse of the exponential mapping with respect to $P_{F}$, we can
move to $A'B'C'D'$ (see Figs.~\ref{fig:tas1},~\ref{fig:tas}) such
that
\[
 |P_{F}R|=|\exp_{P_{F}}^{-1}(R)|=|P_{F}R^{\prime}|=w_{R},
\]
where $R$ belongs to the geodesic $P_{F}R^{\circ}$, for
$R\in\{A,B,C,D\}$,
$R^{\prime}\in\{A^{\prime},B^{\prime},C^{\prime},D^{\prime}\}.$
Furthermore, we construct the symmetric points of $A$, $C$ with
respect to $P_F$, $A^{\star}$ and $C^{\star}$, respectively, such
that $l_{A^{\star}}(P_{F})=l_{A}(P_{F})$ and
$l_{C^{\star}}(P_{F})=l_{C}(P_{F})$ on $T_{P_{F}}(M)$.
Similarly,we construct the symmetric points of $A^{\prime}$,
$C^{\prime}$ with respect to $P_F$, $A^{\prime\star}$ and
$C^{\prime \star}$, respectively, such that
$l_{A^{\prime\star}}(P_{F})=l_{A^{\prime}}(P_{F})$ and
$l_{C^{\prime\star}}(P_{F})=l_{C^{\prime}}(P_{F})$ on
$T_{P_{F}}(M)$ (see Figs.~\ref{fig:tas1} and \ref{fig:tas}).
 Taking into consideration Fig.~\ref{fig:tas}, we deduce
\begin{eqnarray}\label{eq:c3}
\nonumber
 &&\phi_{B^{\prime}C^{\prime \star}}=\pi-\angle B^{\circ}P_{F}C^{\circ},\quad
 \phi_{A^{\prime\star}D^{\prime}}=\pi-\angle A^{\circ}P_{F}D^{\circ},\\
 &&\phi_{C^{\prime \star}D^{\prime}}=\pi-\angle C^{\circ}P_{F}D^{\circ},\quad
 \phi_{A^{\prime \star}B^{\prime}}=\pi-\angle A^{\circ}P_{F}B^{\circ}.
\end{eqnarray}
Replacing (\ref{eq:c3}) in (\ref{eq:c1}), (\ref{eq:c2}), we get
$l_{A'^{\star}}(B')=l_{C'^{\star}}(D')$,
$l_{A'^{\star}}(D)'=l_{B'}(C'^{\star})$. Therefore,
$A'^{\star}B'C'^{\star}D'$ is a parallelogram on $T_{P_{F}}(M).$
\end{proof}

\begin{proposition}\label{P-04}
If $w_{A^{\circ}}=w_{C^{\circ}}$ and
$w_{B^{\circ}}=w_{D^{\circ}},$ then
$A^{\circ}B^{\circ}C^{\circ}D^{\circ}$ is transformed directly
with respect to $P_{F}$ to a parallelogram $A'B'C'D'.$
\end{proposition}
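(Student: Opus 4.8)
To prove Proposition~\ref{P-04} the plan is to specialise, under the standing hypotheses of Proposition~\ref{theorsym}, the two balancing identities (\ref{eq:c1}) and (\ref{eq:c2}) to the symmetric weights $w_{A^{\circ}}=w_{C^{\circ}}$, $w_{B^{\circ}}=w_{D^{\circ}}$ and to read off that the $\exp_{P_{F}}^{-1}$--images $A',B',C',D'$ already form a parallelogram, so that the intermediate passage to the reflected points $A'^{\star},C'^{\star}$ used in Proposition~\ref{theorsym} becomes superfluous. First I would put $w_{A^{\circ}}=w_{C^{\circ}}=:p$ and $w_{B^{\circ}}=w_{D^{\circ}}=:q$ in (\ref{eq:c1}) and (\ref{eq:c2}); the terms $p^{2}+q^{2}$ cancel on the two sides, so (\ref{eq:c1}) collapses to $\cos(\angle A^{\circ}P_{F}B^{\circ})=\cos(\angle C^{\circ}P_{F}D^{\circ})$ and (\ref{eq:c2}) to $\cos(\angle A^{\circ}P_{F}D^{\circ})=\cos(\angle B^{\circ}P_{F}C^{\circ})$.

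Next I would use that $P_{F}$ is an interior point of the convex quadrilateral $A^{\circ}B^{\circ}C^{\circ}D^{\circ}$: then $P_{F}$ lies on none of the geodesic sides, hence each of the four consecutive angles at $P_{F}$ is strictly smaller than $\pi$, and cosine is injective on $(0,\pi)$. Thus $\angle A^{\circ}P_{F}B^{\circ}=\angle C^{\circ}P_{F}D^{\circ}$ and $\angle B^{\circ}P_{F}C^{\circ}=\angle D^{\circ}P_{F}A^{\circ}$. Combining these with the total--angle relation $\angle A^{\circ}P_{F}B^{\circ}+\angle B^{\circ}P_{F}C^{\circ}+\angle C^{\circ}P_{F}D^{\circ}+\angle D^{\circ}P_{F}A^{\circ}=2\pi$ (valid because $M$ is smooth, so $P_{F}$ is a smooth point and the four shortest arcs issuing from it cut a full turn into four sectors, cf.\ (\ref{anglesumt})), I obtain $\angle A^{\circ}P_{F}B^{\circ}+\angle B^{\circ}P_{F}C^{\circ}=\pi$ and likewise $\angle B^{\circ}P_{F}C^{\circ}+\angle C^{\circ}P_{F}D^{\circ}=\pi$. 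Hence the ray $P_{F}B^{\circ}$ lies between $P_{F}A^{\circ}$ and $P_{F}C^{\circ}$ with $\angle A^{\circ}P_{F}C^{\circ}=\pi$, i.e.\ the radial geodesics from $P_{F}$ to $A^{\circ}$ and to $C^{\circ}$ are opposite, and similarly for $B^{\circ}$ and $D^{\circ}$. (This identifies the present configuration with the one of Example~\ref{spec2}: the weighted Fermat--Torricelli point sits on both diagonals.)

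Finally I would pass to the tangent plane $T_{P_{F}}(M)$. Since $|P_{F}A'|=w_{A^{\circ}}=w_{C^{\circ}}=|P_{F}C'|$ and the directions of $A'$ and $C'$ from $P_{F}$ are opposite, $P_{F}$ is the midpoint of $A'C'$, that is $C'=A'^{\star}$; likewise $P_{F}$ is the midpoint of $B'D'$, i.e.\ $D'=B'^{\star}$. Therefore the diagonals $A'C'$ and $B'D'$ of the quadrilateral $A'B'C'D'$ bisect each other at $P_{F}$, which makes $A'B'C'D'$ a parallelogram, produced directly from the $\exp_{P_{F}}^{-1}$ construction without the reflection step of Proposition~\ref{theorsym}; it is non-degenerate because $\angle A^{\circ}P_{F}B^{\circ}\in(0,\pi)$ keeps the four vertices off a common line. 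The step I expect to need the most care is precisely the inversion of cosine, i.e.\ the assertion that each of the four consecutive central angles at $P_{F}$ lies in $(0,\pi)$: this is where interiority of $P_{F}$ in the convex quadrilateral is genuinely used, and where the boundary configurations handled separately in Proposition~\ref{spec1} and Example~\ref{spec2} are excluded.
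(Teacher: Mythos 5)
Your argument is correct, and it reaches the paper's conclusion by the same final step (pass to $T_{P_{F}}(M)$ via $\exp_{P_{F}}^{-1}$ and show that the diagonals of $A'B'C'D'$ bisect each other at $P_{F}$), but it supplies a justification that the paper's own proof omits. The paper merely notes $|P_{F}A'|=|P_{F}C'|$ and $|P_{F}B'|=|P_{F}D'|$ and then asserts ``the diagonals bisect''; equality of the radii alone does not give bisection unless one also knows that $P_{F}$ lies \emph{on} the segments $A'C'$ and $B'D'$, i.e.\ that the directions $P_{F}A'$ and $P_{F}C'$ (resp.\ $P_{F}B'$ and $P_{F}D'$) are opposite. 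You prove exactly this missing antipodality: substituting $w_{A^{\circ}}=w_{C^{\circ}}$, $w_{B^{\circ}}=w_{D^{\circ}}$ into (\ref{eq:c1}) and (\ref{eq:c2}) gives $\cos(\angle A^{\circ}P_{F}B^{\circ})=\cos(\angle C^{\circ}P_{F}D^{\circ})$ and $\cos(\angle B^{\circ}P_{F}C^{\circ})=\cos(\angle D^{\circ}P_{F}A^{\circ})$; interiority of $P_{F}$ puts each consecutive angle in $(0,\pi)$, where cosine is injective, and together with the complete-angle relation (\ref{anglesumt}) this forces adjacent angles to be supplementary, hence $\angle A^{\circ}P_{F}C^{\circ}=\angle B^{\circ}P_{F}D^{\circ}=\pi$. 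This also exhibits the hypothesis of Proposition~\ref{P-04} as the converse of the configuration in Example~\ref{spec2}, a connection the paper does not make explicit. In short: same route, but your version closes a genuine gap in the published proof, and the only delicate point --- that each of the four sector angles at $P_{F}$ lies strictly between $0$ and $\pi$ --- is exactly the one you flag and correctly tie to the interiority of $P_{F}$.
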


\begin{proof}[Proof of Proposition~\ref{P-04}:]
Applying the invariance property (geometric plasticity) of $P_{F}$
we transform the initial quadrilateral
$A^{\circ}B^{\circ}C^{\circ}D^{\circ}$ to $ABCD$ and applying the
inverse of the exponential mapping with respect to $P_{F}$, we get
$A'B'C'D'$ such that
\[|P_{F}R|=|\exp_{P_{F}}^{-1}(R)|=|P_{F}R'|=w_{R},\]
for $R\in\{A,B,C,D\}$ and
$R^{\prime}\in\{A^{\prime},B^{\prime},C^{\prime},D^{\prime}\}$. We
conclude that $A'B'C'D'$ is a parallelogram because the diagonals
bisect.
\end{proof}

We proceed by proving the following theorem:

\begin{theorem}\label{T-04}
The geometrization of the plasticity of convex quadrilaterals on
$M$ is given by the following two classes of parallelograms:

\underline{Class A}: Parallelograms derived from the initial
quadrilateral $A^{\circ}B^{\circ}C^{\circ}D^{\circ}$ to the
tangent plane at $P_{F}$ by taking the symmetric points of
$A^{\prime}$ and $C^{\prime}$ with respect to $P_{F}$ such that
\[
 |P_{F}R|=|\exp_{P_{F}}^{-1}(R)|=|P_{F}R'|=w_{R},
\]
where $R$ lies on the geodesic $P_{F}R^{\circ}$ for
 $R\in\{A,B,C,D\}$ and $R^{\prime}\in\{A^{\prime},B^{\prime},C^{\prime},D^{\prime}\}.$

\underline{Class B}: Parallelograms derived from the initial
quadrilateral to the tangent plane at $P_{F}$ by taking the
symmetric points of $B^{\prime}$ and $D^{\prime}$ with respect to
$P_{F}.$
\end{theorem}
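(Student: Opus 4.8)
The plan is to assemble the statement from the two symmetrization results already in hand and to explain why the resulting parallelograms encode exactly the plasticity equations of Theorem~\ref{T-01}. Class A is precisely the content of Proposition~\ref{theorsym}: starting from $A^{\circ}B^{\circ}C^{\circ}D^{\circ}$ on $M$, the invariance of the w.\,F-T point $P_{F}$ (geometric plasticity) lets us rescale every geodesic arc $P_{F}R^{\circ}$ so that its endpoint $R$ sits at geodesic distance $w_{R}$ from $P_{F}$; then $\exp_{P_{F}}^{-1}$ carries $ABCD$ to $A'B'C'D'$ in $T_{P_{F}}(M)$ with $|P_{F}R'|=w_{R}$, and reflecting $A'$, $C'$ through $P_{F}$ produces $A'^{\star}B'C'^{\star}D'$. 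The two relations obtained by squaring and adding (\ref{eq:quadr115}) with (\ref{eq:quadr15Kquad}), namely (\ref{eq:c1}) and its index-swapped companion (\ref{eq:c2}), together with the angle identities (\ref{eq:c3}), force $l_{A'^{\star}}(B')=l_{C'^{\star}}(D')$ and $l_{A'^{\star}}(D')=l_{B'}(C'^{\star})$, i.e. opposite sides equal. So for Class A I would simply invoke Proposition~\ref{theorsym}.

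For Class B I would rerun that argument with the roles of the two diagonal pairs interchanged. Starting again from the balancing system $\sum_{Q}w_{Q}\cos(\angle QP_{F}R)=0$, I eliminate $w_{A^{\circ}},w_{D^{\circ}}$ instead of $w_{A^{\circ}},w_{B^{\circ}}$, getting the sine relations analogous to (\ref{ratio1tgp})--(\ref{ratio3tgp}) with $B^{\circ}\leftrightarrow D^{\circ}$; squaring and adding the two cosine identities for the pair $(A^{\circ},D^{\circ})$, and then for the pair $(B^{\circ},C^{\circ})$, yields the mirror of (\ref{eq:c1})--(\ref{eq:c2}). Reflecting $B'$ and $D'$ through $P_{F}$ and using the angle identities $\phi_{R'S'^{\star}}=\pi-\angle R^{\circ}P_{F}S^{\circ}$ exactly as in (\ref{eq:c3}) then gives $A'B'^{\star}C'D'^{\star}$ with equal opposite sides, hence a parallelogram. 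Proposition~\ref{P-04} is the common degenerate instance $w_{A^{\circ}}=w_{C^{\circ}}$, $w_{B^{\circ}}=w_{D^{\circ}}$, in which no reflection is needed and the diagonals already bisect.

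To justify the phrase ``geometrization of the plasticity'' I would observe that in the 4-inverse w.\,F-T problem the weights $w_{R}=(w_{R})_{ABCD}$ are exactly the variable weights constrained by the plasticity equations (\ref{plastic1})--(\ref{invcond4}) of Theorem~\ref{T-01}, equivalently by the sine system (\ref{ratio1t})--(\ref{ratio3t}). Under $R\mapsto R'$ one has $|P_{F}R'|=w_{R}$ and $\angle R'P_{F}S'=\angle R^{\circ}P_{F}S^{\circ}$, so the side equalities characterizing membership in Class A (resp. Class B) are algebraically the relations (\ref{ratio1tgp})--(\ref{ratio3tgp}) for the pair $(A,B)$ (resp. $(A,D)$), i.e. the plasticity equations transported to $T_{P_{F}}(M)$. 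Hence a one-parameter evolution of the weights satisfying Theorem~\ref{T-01} is realized as a one-parameter family inside one of these two parallelogram classes, which is the assertion of the theorem.

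The main obstacle is bookkeeping rather than a new difficulty. One must check that after the reflection the quadrilateral really closes up as a parallelogram in every admissible configuration (not only when $P_{F}$ lies on a diagonal), that $P_{F}$ stays interior so all the weights entering the sine relations are positive and the $2\times 2$ elimination determinant is nonzero, and that the perimeter bound $<2r_{i}$ guaranteeing injectivity of $\exp_{P_{F}}^{-1}$ on $W_{A}$ is inherited by $ABCD$. These are the same positivity and counterclockwise-orientation arguments used in the proof of Theorem~\ref{plasticityprincipleM} and in Proposition~\ref{theorsym}, so I would cite them rather than repeat the computations.
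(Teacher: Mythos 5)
Your proposal is correct and follows essentially the same route as the paper: Class A is obtained by invoking Proposition~\ref{theorsym} directly, and Class B by repeating the same symmetrization with the roles of the two diagonal pairs interchanged, which the paper phrases as the relabeling $A^{\circ}\leftrightarrow B^{\circ}$, $C^{\circ}\leftrightarrow D^{\circ}$ followed by another application of Proposition~\ref{theorsym} under the weight ordering $w_{A^{\circ}}>w_{B^{\circ}}>w_{C^{\circ}}>w_{D^{\circ}}$. Your added remarks on why the side equalities transport the plasticity equations to $T_{P_{F}}(M)$ go slightly beyond what the paper writes but do not change the argument.
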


\begin{proof}[Proof of Theorem~\ref{T-04}:]
We consider the following two cases:

\vskip1mm 1.\quad
$w_{B^{\circ}}>w_{A^{\circ}}>w_{D^{\circ}}>w_{C^{\circ}}.$

\vskip1mm\noindent Applying Theorem~\ref{theorsym}, we derive
class A of Theorem~\ref{T-04}.

\vskip1mm 2.\quad
$w_{A^{\circ}}>w_{B^{\circ}}>w_{C^{\circ}}>w_{D^{\circ}}.$

\vskip1mm \noindent Setting $A^{\circ}\to B^{\circ}$,
$B^{\circ}\to A^{\circ}$, $C^{\circ}\to D^{\circ}$, $D^{\circ} \to
C^{\circ},$ and applying Theorem~\ref{theorsym}, we derive class B
of Theorem~\ref{T-04}.

We note that class A and class B contain similar and non similar
parallelograms. Similar parallelograms are derived by the
condition of the 4-inverse w.\,F-T problem:
\[
 w_{A^{\circ}}+w_{B^{\circ}}+w_{C^{\circ}}+w_{D^{\circ}}=const,
\]
by scaling the constant which takes positive real values.
\end{proof}

\begin{example}\label{verfour}\rm
We consider for simplicity a convex quadrilateral
$A^{\circ}B^{\circ}C^{\circ}D^{\circ}$ in $\mathbb{R}^{2},$ with
given distances, angles  and weights
\begin{eqnarray*}
  l_{A^{\circ}}\eq 5,\quad l_{B^{\circ}}=7.5,\quad l_{C^{\circ}}=5,\quad l_{D^{\circ}}=10,\\
  \angle A^{\circ}P_{F}B^{\circ}\eq 120^\circ,\
 \angle B^{\circ}P_{F}C^{\circ}=90^\circ,\
 \angle C^{\circ}P_{F}D^{\circ}=50^\circ,\
 \angle D^{\circ}P_{F}A^{\circ}=100^\circ,\\
  w_{A^{\circ}}\eq 0.81,\quad w_{B^{\circ}}=0.712,\quad w_{C^{\circ}}=0.444,\quad w_{D^{\circ}}=0.4
\end{eqnarray*}
taken from \cite[Example 4.7, p.~418]{Zachos/Zou:88},
 for the case of $\mathbb{R}^{2}$, where $P_F$ is the
corresponding weighted Fermat-Torricelli point which is the
intersection of the four prescribed lines $P_{F}R$ for
$R\in\{A^{\circ},B^{\circ},C^{\circ},D^{\circ}\}$ (see
Fig.~\ref{fig:tas56}). The convex quadrilateral
$A^{\circ}B^{\circ}C^{\circ}D^{\circ}$ of Fig.~\ref{fig:tas55} has
the same angles $\angle RP_{F}S,$ and lengths $l_{R}$, for
$R,S\in\{A^{\circ},B^{\circ},C^{\circ},D^{\circ}\}$, like in
Fig.~\ref{fig:tas56} with weights $w_{A^{\circ}}=0.76,
w_{B^{\circ}}=0.76, w_{C^{\circ}}=0.34$, $w_{D^{\circ}}=0.5$ taken
from \cite[Example 4.7]{Zachos/Zou:88}.
 The plasticity equations of Theorem~\ref{T-01} and Corollary~\ref{corol} show
that the w.\,F-T point $P_F$ remains the same for
Figures~\ref{fig:tas56} and~\ref{fig:tas55}. We notice that an
increase of $w_{D^{\circ}}$ from 0.4 to 0.5 causes a decrease to
$w_{A^{\circ}},$ $w_{C^{\circ}}$ and an increase to
$w_{B^{\circ}}$ (plasticity). The weights that correspond to
Fig.~\ref{fig:tas56} derive a parallelogram
$A'^{\star}B'C'^{\star}D'$ that belongs to class A and the weights
that correspond to Fig.~\ref{fig:tas55} derive a parallelogram
$A'B'^{\star}C'D'^{\star}$ that belongs to class B. The weights of
Figures~\ref{fig:tas56} and~\ref{fig:tas55} satisfy the plasticity
equations for $const=2.37$.
\end{example}

\begin{figure}
\centering
\includegraphics[scale=0.40]{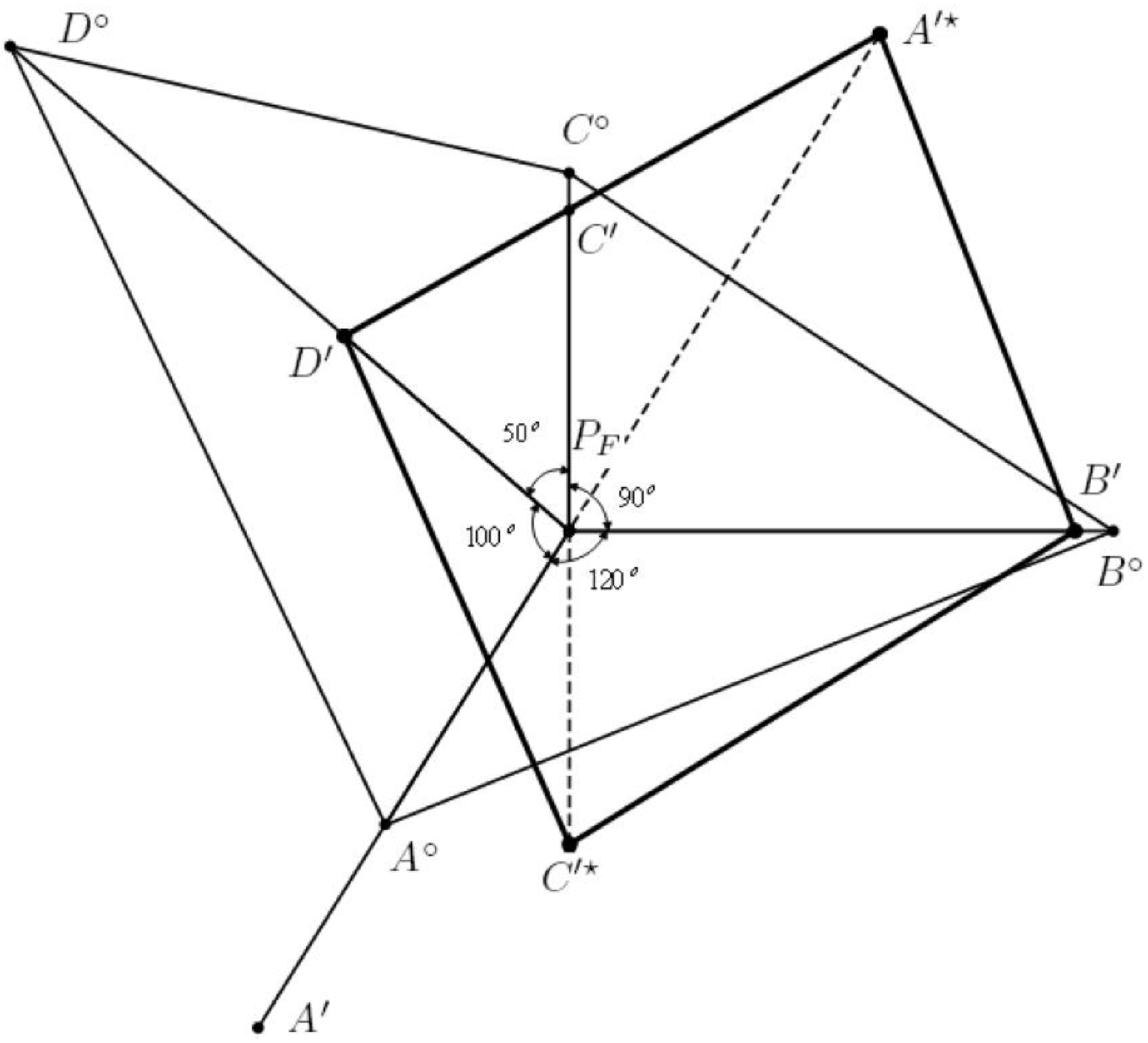}
\caption{} \label{fig:tas56}
\end{figure}

\begin{figure}
\centering
\includegraphics[scale=0.40]{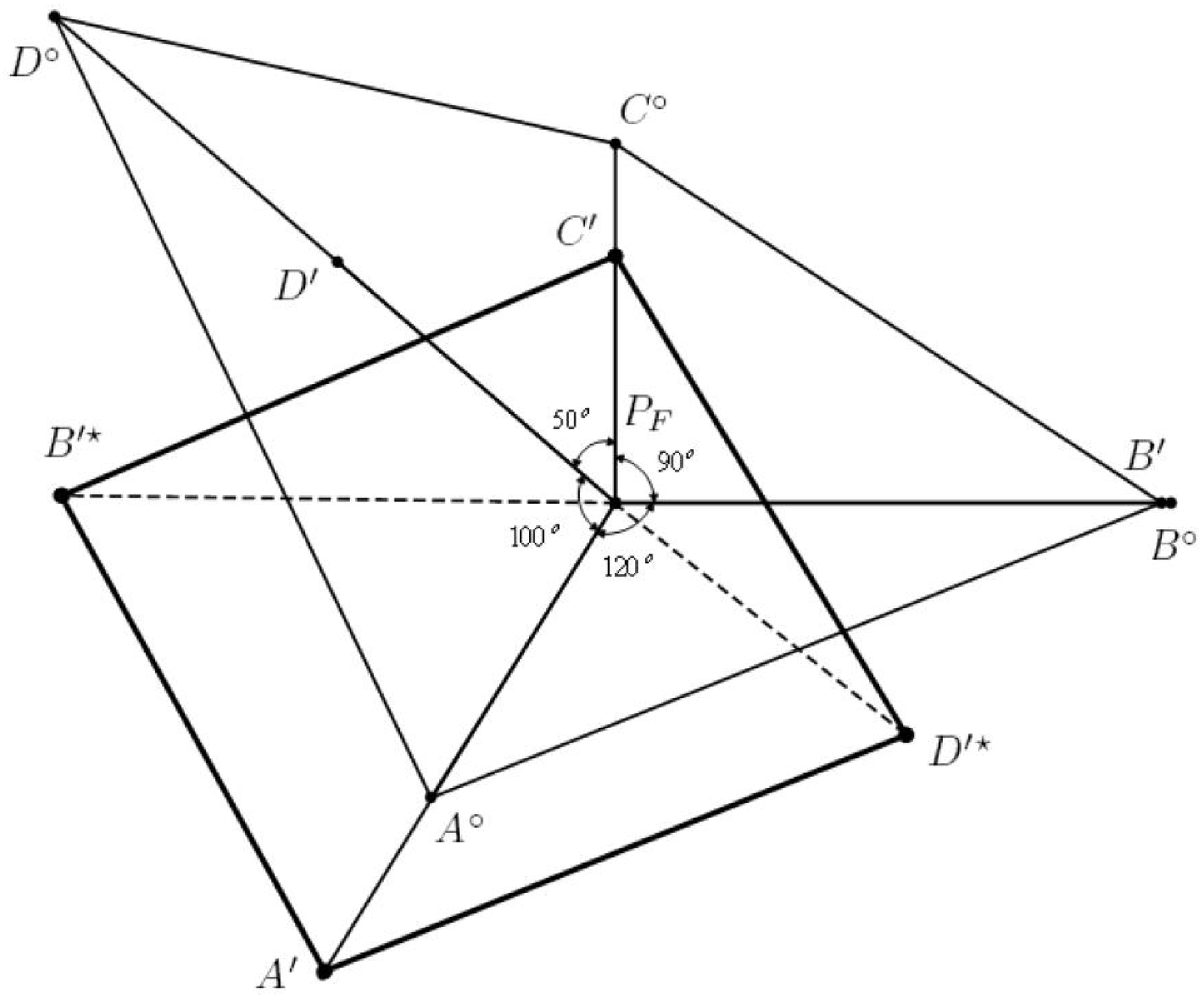}
\caption{} \label{fig:tas55}
\end{figure}

\begin{remark}\rm
We have excluded some trivial cases where the weighted
quadrilaterals can be transformed directly to parallelograms and
we consider quadrilaterals that the initial starting values of
their weights which satisfy the plasticity equations must also
satisfy the inequalities
$w_{B^{\circ}}>w_{A^{\circ}}>w_{D^{\circ}}>w_{C^{\circ}}$ or
$w_{A^{\circ}}>w_{B^{\circ}}>w_{C^{\circ}}>w_{D^{\circ}}.$
\end{remark}


\vskip1mm The author is grateful to Professor Dr.Vladimir Rovenski
for many useful discussions, comments on the paper and for
communicating to him remark 2.


\end{document}